\newtheoremstyle{erdfn}
  {}
  {}
  {\itshape}
  {}
  {\sffamily\fontseries{bx}\selectfont}
  { -- }
  { }
  {}
\newtheoremstyle{erthm}
  {}
  {}
  {\itshape}
  {}
  {\sffamily\fontseries{bx}\selectfont}
  { -- }
  { }
  {}
\newtheoremstyle{errem}
  {}
  {}
  {}
  {}
  {\ttfamily\itshape}
  { -- }
  { }
  {}
\theoremstyle{erthm}
\newtheorem{thm}{Theorem}
\newtheorem{prop}[thm]{Proposition}
\newtheorem{lem}[thm]{Lemma}
\theoremstyle{errem}
\newtheorem{rem}[thm]{Remark}
\theoremstyle{erdfn}
 \newlength{\h@uteurnumerateur}
 \newlength{\h@uteurdenominateur}
\newcommand*{\quotientdroite}[2]{
  \mathchoice%
  {
    \settoheight{\h@uteurnumerateur}{\ensuremath{\displaystyle{#1#2}}}%
    \settoheight{\h@uteurdenominateur}{\ensuremath{\displaystyle{#1#2}}}%
    \raisebox{0.5\h@uteurnumerateur}{\ensuremath{\displaystyle{#1}}}%
    \mkern-5mu\diagup\mkern-4mu%
    \raisebox{-0.5\h@uteurdenominateur}{\ensuremath{\displaystyle{#2}}}%
  }
  {
    \settoheight{\h@uteurnumerateur}{\ensuremath{\textstyle{#1#2}}}%
    \settoheight{\h@uteurdenominateur}{\ensuremath{\textstyle{#1#2}}}%
  \raisebox{0.2\h@uteurnumerateur}{\ensuremath{\textstyle{#1}}}%
  /%
  \raisebox{-0.2\h@uteurdenominateur}{\ensuremath{\textstyle{#2}}}%
  }
  {
    \settoheight{\h@uteurnumerateur}{\ensuremath{\scriptstyle{#1#2}}}%
    \settoheight{\h@uteurdenominateur}{\ensuremath{\scriptstyle{#1#2}}}%
  \raisebox{0.2\h@uteurnumerateur}{\ensuremath{\scriptstyle{#1}}}%
  /%
  \raisebox{-0.2\h@uteurdenominateur}{\ensuremath{\scriptstyle{#2}}}%
  }
  {
    \settoheight{\h@uteurnumerateur}{\ensuremath{\scriptscriptstyle{#1#2}}}%
    \settoheight{\h@uteurdenominateur}{\ensuremath{\scriptscriptstyle{#1#2}}}%
  \raisebox{0.2\h@uteurnumerateur}{\ensuremath{\scriptscriptstyle{#1}}}%
  /%
  \raisebox{-0.2\h@uteurdenominateur}{\ensuremath{\scriptscriptstyle{#2}}}%
  }
}
\newcommand*{\quotientgauche}[2]{
  \mathchoice%
  {
    \settoheight{\h@uteurnumerateur}{\ensuremath{\displaystyle{#1#2}}}%
    \settoheight{\h@uteurdenominateur}{\ensuremath{\displaystyle{#1#2}}}%
    \raisebox{-0.5\h@uteurnumerateur}{\ensuremath{\displaystyle{#1}}}%
    \mkern-3mu\diagdown\mkern-5mu%
    \raisebox{0.5\h@uteurdenominateur}{\ensuremath{\displaystyle{#2}}}%
  }
  {
    \settoheight{\h@uteurnumerateur}{\ensuremath{\textstyle{#1#2}}}%
    \settoheight{\h@uteurdenominateur}{\ensuremath{\textstyle{#1#2}}}%
  \raisebox{-0.2\h@uteurnumerateur}{\ensuremath{\textstyle{#1}}}%
  \backslash%
  \raisebox{0.2\h@uteurdenominateur}{\ensuremath{\textstyle{#2}}}%
  }
  {
    \settoheight{\h@uteurnumerateur}{\ensuremath{\scriptstyle{#1#2}}}%
    \settoheight{\h@uteurdenominateur}{\ensuremath{\scriptstyle{#1#2}}}%
  \raisebox{-0.2\h@uteurnumerateur}{\ensuremath{\scriptstyle{#1}}}%
  \backslash%
  \raisebox{0.2\h@uteurdenominateur}{\ensuremath{\scriptstyle{#2}}}%
  }
  {
    \settoheight{\h@uteurnumerateur}{\ensuremath{\scriptscriptstyle{#1#2}}}%
    \settoheight{\h@uteurdenominateur}{\ensuremath{\scriptscriptstyle{#1#2}}}%
  \raisebox{-0.2\h@uteurnumerateur}{\ensuremath{\scriptscriptstyle{#1}}}%
  \backslash%
  \raisebox{0.2\h@uteurdenominateur}{\ensuremath{\scriptscriptstyle{#2}}}%
  }
}
\let\@@pmod\pmod
\DeclareRobustCommand{\pmod}{\@ifstar\@pmods\@@pmod}
\def\@pmods#1{\mkern4mu({\operator@font mod}\mkern 6mu#1)}
\DeclarePairedDelimiter{\abs}{\lvert}{\rvert}
\DeclarePairedDelimiter{\Abs}{\lVert}{\rVert}
\newcommand*{\CC}{\mathbb{C}}
\newcommand*{\ca}{\mathfrak{a}}
\newcommand*{\cC}{\mathcal{C}}
\newcommand*{\cT}{\mathcal{T}}
\newcommand*{\cb}{\mathfrak{b}}
\newcommand*{\cush}{\mathfrak{S}_{\ell+1/2}}%
\newcommand*{\cusp}{S_{2\ell}}
\newcommand*{\dd}{%
  \mathop{\mathrm{d}\null}\mskip-\thinmuskip\mathord{\null}}
\renewcommand*{\epsilon}{\varepsilon}
\DeclareMathOperator{\ee}{e}
\newcommand*{\f}{\mathfrak{f}}
\newcommand*{\he}{\mathfrak{T}_{p^2}}
\newcommand*{\ic}{\mathrm{i}}
\DeclareRobustCommand{\Im}{\operatorname{Im}}
\newcommand*{\N}{\mathbb{N}}
\newcommand*{\pk}{\mathscr{H}}
\newcommand*{\R}{\mathbb{R}}
\DeclareRobustCommand{\Re}{\operatorname{Re}}
\DeclareMathOperator{\Sh}{Sh}
\newcommand*{\sN}{\mathcal{N}}
\newcommand*{\sumsqf}{\mathop{\sideset{}{^\flat}\sum}}
\newcommand*{\wh}[1]{\widehat{#1}}
\begin{document}
\title[Sign of Fourier coefficients of modular forms]{Sign of Fourier coefficients of modular forms of half integral weight}
\author[Y.-K. Lau]{Yuk-kam Lau}
\address{Yuk-kam Lau. Department of Mathematics, The University of Hong Kong, Pokfulam Road, Hong Kong, Hong Kong}
\email{yklau@maths.hku.hk}
\author[E. Royer]{Emmanuel Royer}
\address{Emmanuel Royer. (1) Université Clermont Auvergne, Université Blaise Pascal, Laboratoire de Mathématiques, BP 10448, F-63000 Clermont-Ferrand, France. (2) CNRS, UMR 6620, LM, F-63178 Aubière, France}
\email{emmanuel.royer@math.univ-bpclermont.fr}
\author[J. Wu]{Jie Wu}
\address{Jie Wu. (1) School of Mathematics, Shandong University, Jinan, Shandong 250100, China.
(2) CNRS, Institut \'Elie Cartan de Lorraine, UMR 7502, Universit\'e de Lorraine, F-54506 Van\-d\oe uvre-l\`es-Nancy, France.
(3) Universit\'e de Lorraine, Institut \'Elie Cartan de Lorraine, UMR 7502, F-54506 Van\-d\oe uvre-l\`es-Nancy, France}
\email{jie.wu@univ-lorraine.fr}
\date{\today -- \currenttime}
\thanks{This work was supported by a grant from France/Hong Kong Joint Research Scheme, Procore, sponsored by the Research Grants Council of Hong Kong (F-HK026/12T) and the Consulate General of France in Hong Kong \& Macau (PHC PROCORE 2013, N\(^\circ\) 28212PE). Lau is also supported by GRF 17302514 of the Research Grants Council of Hong Kong.}

\keywords{Half integral weight modular forms, sign of Fourier coefficients, Dirichlet series}
\subjclass[2010]{Primary 11F30 ; Secondary 11F37,11M41,11N25}
\begin{abstract}
We establish lower bounds for (i) the numbers of positive and negative terms and (ii) the number of sign changes  in the sequence of Fourier coefficients at squarefree integers of a half-integral weight modular Hecke eigenform. %
\end{abstract}
\ddmmyyyydate
\maketitle
\section{Introduction}
\subsection{Results}
Let \(\ell\geq 4\)  be a positive integer. Denote by \(\cush\) the vector space of all cusp forms of weight \(\ell+1/2\) for the congruence subgroup \(\Gamma_0(4)\) . The Fourier expansion of \(\f\in\cush\) at \(\infty\)  can be written as
\begin{equation}\label{eq_newone}%
\f(z)=\sum_{n=1}^{\infty}\lambda_{\f}(n) n^{\ell/2-1/4}\ee(nz) \quad (z\in\pk),
\end{equation}
where \(\ee(z) = \ee^{2\pi\ic z}\) and \(\pk\) is the Poincar\'e upper half plane. For any squarefree integer \(t\) Waldspurger \cite{MR646366} proved the following elegant formula
\begin{equation}\label{WaldspurgerFormula}
\lambda_{\f}(t)^2=C_\f L(\tfrac{1}{2}, \Sh_t\f, \chi_t),
\end{equation}
where \(\Sh_t\f\)  is the Shimura lift of \(\f\) associated to \(t\) (this is a cusp form of weight \(2\ell\) and of level 2), \(\chi_t(n)\) is a real character modulo \(t\) (defined in Section~\ref{sec_background}) and \(C_\f\) is a constant depending on \(\f\) only. In the following, the letter \(t\) will always be a squarefree integer and \(\sumsqf\) a sum over squarefree integers.

In view of \eqref{WaldspurgerFormula}, Kohnen \cite{MR2726580} posed the following question: in the case where \(\lambda_{\f}(t)\) is a real number, what is its sign? Very recently, Hulse, Kairal, Kuan \& Lim made a significant progress toward this question by proving that \(\lambda_{\f}(t)\) changes sign infinitely often if \(\f\in\cush\) is an eigenform of all the Hecke operators (see \cite[Theorem 1.1]{MR2904928}). 

In order to describe the order of magnitude of \(\lambda_{\f}(t)\), we choose \(\alpha\) a non negative real number such that the inequality
\begin{equation}\label{defvartheta}
\lambda_{\f}(t)\ll_{\f, \alpha} t^{\alpha}
\end{equation}
holds for all squarefree integers \(t\). The implied constant depends on \(\f\) and \(\alpha\) only. It is conjectured that one can take
\begin{equation}\label{RamanujanConjecture}
\alpha=\epsilon
\end{equation}
for any \(\epsilon>0\). This could be regarded as an analogue of the Ramanujan conjecture on cusp forms of integral weight. Conrey \& Iwaniec \cite[Corollary 1.3]{MR1779567} proved that one can take %
\begin{equation}\label{BoundConreyIwaniec}
\alpha=\tfrac{1}{6}+\epsilon %
\end{equation}
for any \(\epsilon>0\).

The main aim of this paper is to establish a quantitative version of the result of Hulse, Kairal, Kuan \& Lim. Define
\begin{align}\label{defTfpmx}
\cT_{\f}^{+}(x)&=\#\left\{\text{\(t\leq x\), \(t\) squarefree}\colon \lambda_{\f}(t)>0\right\}\\
\shortintertext{and}
\cT_{\f}^{-}(x)&=\#\left\{\text{\(t\leq x\), \(t\) squarefree}\colon \lambda_{\f}(t)<0\right\}.
\end{align}

We establish the following results.

\begin{thm}\label{thm1}
Let \(\ell\geq 4\) be a positive integer and \(\f\in\cush\) an eigenform of all the Hecke operators 
such that the $\lambda_{\f}(n)$ are real for all $n\ge 1$. 
Then for any \(\epsilon>0\), we have
\begin{equation}\label{thm1Equ1}
\cT_{\f}^{+}(x) \geq x^{1-2\alpha-\epsilon}, 
\qquad
\cT_{\f}^{-}(x) \geq x^{1-2\alpha-\epsilon}
\end{equation}
for all \(x\geq x_0({\mathfrak f}, \varepsilon)\), where \(\alpha\)  is given by \eqref{defvartheta} and \( x_0({\mathfrak f}, \varepsilon)\) is a positive real number depending only on \(\f\)  and \(\epsilon\). 
\end{thm}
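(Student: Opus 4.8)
The plan is to compare the second moment of the sequence $(\lambda_{\f}(t))_{t}$ with its first moment, both taken over squarefree $t\leq x$, using the pointwise bound $\abs{\lambda_{\f}(t)}\leq C_{\f}t^{\alpha}$ from \eqref{defvartheta}. The guiding principle is that the second moment is \emph{large}, of order $x$, whereas the first moment is \emph{small}, because $\f$ is a cusp form and its coefficients oscillate. If the negative coefficients were too few, the positive ones would have to carry the entire second moment; but then they would also make the first moment large, contradicting the cancellation. Interchanging the roles of the signs bounds $\cT_{\f}^{+}(x)$ in the same way, so it suffices to treat $\cT_{\f}^{-}(x)$.

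The argument rests on two analytic inputs supplied by the associated Dirichlet series. The first is the second moment lower bound
\begin{equation*}
\sumsqf_{t\leq x}\lambda_{\f}(t)^{2}\gg_{\f}x .
\end{equation*}
I would derive it from Rankin--Selberg theory: $\sum_{n\geq1}\lambda_{\f}(n)^{2}n^{-s}$ has a simple pole at $s=1$ with positive residue, whence $\sum_{n\leq x}\lambda_{\f}(n)^{2}\sim c_{\f}x$. Writing $n=tm^{2}$ with $t$ squarefree and invoking the Hecke relation that expresses $\lambda_{\f}(tm^{2})$ through $\lambda_{\f}(t)$ and the normalised eigenvalues of the Shimura lift $\Sh\f$, one transfers this asymptotic to the squarefree subsum; as the summands are nonnegative, partial summation then yields the stated bound (and knowing it merely as $\gg_{\f}x^{1-\epsilon}$ would already suffice). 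The second input is the first moment cancellation
\begin{equation*}
\sumsqf_{t\leq x}\lambda_{\f}(t)\ll_{\f}x^{1/2+\epsilon}.
\end{equation*}
Since $\f$ is a cusp form, the Hecke $L$-function $\sum_{n\geq1}\lambda_{\f}(n)n^{-s}$ is entire of finite order with a functional equation, and shifting the contour in Perron's formula gives $\sum_{n\leq x}\lambda_{\f}(n)\ll_{\f}x^{1/2+\epsilon}$. To impose squarefreeness I would insert $\mu^{2}(n)=\sum_{d^{2}\mid n}\mu(d)$, reducing matters to the coefficient sums of the forms $\f\mid U_{d^{2}}$, which are again cusp forms; these satisfy the same square-root cancellation, and because the weight $\ell\geq4$ makes the dependence on $d$ decay like $d^{1-\ell}$, the summation over $d\leq\sqrt{x}$ converges and preserves the exponent $1/2+\epsilon$.

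Granting these inputs the conclusion is elementary. Write $\Sigma^{+}$ (resp.\ $\Sigma^{-}$) for summation over squarefree $t\leq x$ with $\lambda_{\f}(t)>0$ (resp.\ $\lambda_{\f}(t)<0$), and suppose, for a contradiction, that $\cT_{\f}^{-}(x)<x^{1-2\alpha-\epsilon}$. By \eqref{defvartheta} each $\abs{\lambda_{\f}(t)}\leq C_{\f}x^{\alpha}$ for $t\leq x$, so $\Sigma^{-}\lambda_{\f}(t)^{2}\leq\cT_{\f}^{-}(x)\,C_{\f}^{2}x^{2\alpha}<C_{\f}^{2}x^{1-\epsilon}=o(x)$; the second moment bound therefore forces $\Sigma^{+}\lambda_{\f}(t)^{2}\gg_{\f}x$. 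Using $\lambda_{\f}(t)^{2}\leq C_{\f}x^{\alpha}\lambda_{\f}(t)$ on the positive part gives $\Sigma^{+}\lambda_{\f}(t)\gg_{\f}x^{1-\alpha}$, while $\Sigma^{-}\abs{\lambda_{\f}(t)}\leq\cT_{\f}^{-}(x)\,C_{\f}x^{\alpha}<C_{\f}x^{1-\alpha-\epsilon}$. Hence $\sumsqf_{t\leq x}\lambda_{\f}(t)=\Sigma^{+}\lambda_{\f}(t)-\Sigma^{-}\abs{\lambda_{\f}(t)}\gg_{\f}x^{1-\alpha}$. As $\alpha<\tfrac{1}{2}$ — which we may assume, the bound being void otherwise and \eqref{BoundConreyIwaniec} giving $\alpha\leq\tfrac{1}{6}+\epsilon$ in any case — this contradicts the cancellation $\ll_{\f}x^{1/2+\epsilon}$. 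Thus $\cT_{\f}^{-}(x)\geq x^{1-2\alpha-\epsilon}$ for all $x\geq x_{0}(\f,\epsilon)$, and symmetrically for $\cT_{\f}^{+}(x)$.

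I expect the main difficulty to lie in the two moment estimates over \emph{squarefree} integers rather than in their combination. The analytic continuation of the underlying $L$-functions is classical, but the restriction to squarefree $t$ must be carried out so as to preserve the oscillation in the first moment and the pole in the second; this is where the Shimura--Hecke relation between $\lambda_{\f}(tm^{2})$ and $\lambda_{\f}(t)$, and a uniform-in-$d$ treatment of the functional equations of $\f\mid U_{d^{2}}$, do the real work. The favourable decay $d^{1-\ell}$ afforded by $\ell\geq4$ is what keeps the squarefree sieving harmless.
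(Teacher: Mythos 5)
Your overall strategy is exactly the paper's own: the proof in Section~\ref{sec_thm1} likewise plays the squarefree second moment \(\sumsqf_{t\leq x}\lambda_{\f}(t)^2\asymp x\) against first-moment cancellation, converts \(\lambda_{\f}(t)^2\) into \(\abs*{\lambda_{\f}(t)}\) and then into a count via \eqref{defvartheta}, and your contradiction endgame is an unweighted repackaging of the paper's sign detector \((\abs{\lambda_{\f}(t)}\pm\lambda_{\f}(t))/2\) used there with \(\log(x/t)\) weights. The genuine gap is in your first-moment input. You claim \(\sumsqf_{t\leq x}\lambda_{\f}(t)\ll_{\f} x^{1/2+\epsilon}\) on the grounds that, after inserting \(\mu^2(n)=\sum_{d^2\mid n}\mu(d)\), the forms \(\f\mid U_{d^2}\) obey square-root cancellation with a \(d\)-dependence decaying like \(d^{1-\ell}\). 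That decay does not exist: \(U_{d^2}\) preserves the weight, and in the normalization \eqref{eq_newone} the coefficients \(\lambda_{\f}(d^2m)\) do not shrink with \(d\) at all --- Lemma~\ref{lemma6} gives \(\abs*{\lambda_{\f}(td^2)}\ll\abs*{\lambda_{\f}(t)}\tau(d)\psi(d)\), i.e.\ mild growth. What actually depends on \(d\) is the conductor entering the functional equation, and it costs \emph{positive} powers of \(d\): in the paper's Lemma~\ref{lemMsf} (following Hulse, Kiral, Kuan \& Lim) the sieve terms \(D_r(s)=\sum_{r^2\mid m}\lambda_{\f}(m)m^{-s}\) satisfy \(D_r(\sigma+\ic\tau)\ll_{\f,\epsilon}(1+\abs{\tau})^{1-\sigma+\epsilon}r^{2-4\sigma+\epsilon}\), so the sum over \(r\) converges only for \(\Re s>3/4\), and the provable first-moment bound is \(\ll x^{3/4+\epsilon}\), not \(x^{1/2+\epsilon}\). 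Your argument survives this correction, but only because the contradiction merely needs the first moment to be \(o(x^{1-\alpha})\), which forces \(\alpha<1/4\) rather than your stated \(\alpha<1/2\); this is harmless, since by \eqref{BoundConreyIwaniec} one may always replace \(\alpha\) by \(\min(\alpha,\tfrac16+\epsilon)\), but the threshold has to be stated correctly.

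A secondary compression: deducing \(\sumsqf_{t\leq x}\lambda_{\f}(t)^2\gg x\) from \(\sum_{n\leq x}\lambda_{\f}(n)^2\sim r_{\f}x\) is not nonnegativity plus partial summation. Writing \(n=tm^2\) and using Lemma~\ref{lemma6} only yields \(x\ll x^{1/2}\sumsqf_{t\leq x}\lambda_{\f}(t)^2t^{-1/2}\log(x/t)\), and this weighted lower bound by itself is compatible with the squarefree mass concentrating near \(t\asymp x^{1/2}\), in which case one gets only \(\sumsqf_{t\leq x}\lambda_{\f}(t)^2\gg x^{3/4}/\log x\) --- short of the \(\gg x^{1-\epsilon}\) your contradiction requires. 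One must combine it with the matching weighted \emph{upper} bound (a consequence of \eqref{eq_kolm}) to force the mass into an interval \((c_3x,x]\); this is precisely the \(c_3\)-comparison carried out in the paper's proof of Lemma~\ref{lem_interus}. All ingredients are contained in your Rankin--Selberg input, so this step is repairable, but as written it is asserted rather than proved.
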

\begin{rem}
In particular, the Conrey \& Iwaniec bound leads to
\[
\cT_{\f}^{+}(x) \geq x^{2/3-\epsilon}, 
\qquad
\cT_{\f}^{-}(x) \geq x^{2/3-\epsilon}
\]
for all \(x\geq x_0(\f, \epsilon)\).
\end{rem}
\begin{rem}
The study about the sign equidistribution of the sequence \(\left(\lambda_{\f}(tn^2)\right)_{n\in \N}\) was investigated in \cite{MR2512356}, \cite{MR2726580}, \cite{MR3010150}, \cite{MR3116654} and \cite{InamWiese2014}. In particular, Inam \& Wiese proved in~\cite{MR3116654} that, if \(t\) is a \emph{fixed} squarefree integer, then%
\[%
\lim_{x\to +\infty}\frac{\#\{\text{\(p\) prime}\colon p\leq x,\, \lambda_{\f}(tp^2)>0\}}{\#\{\text{\(p\) prime}\colon p\leq x\}}=\frac{1}{2}
\]
and %
\[%
\lim_{x\to +\infty}\frac{\#\{\text{\(p\) prime}\colon p\leq x,\, \lambda_{\f}(tp^2)<0\}}{\#\{\text{\(p\) prime}\colon p\leq x\}}=\frac{1}{2}. %
\]
\end{rem}


Let us precise what we call \emph{number of squarefree sign changes} of the sequence \(\lambda_{\f}=\left(\lambda_{\f}(t)\right)_{t\geq 0}\) (where \(\lambda_{\f}(0)=0\)) restricted to squarefree indexes \(t\). From this sequence of Fourier coefficients, we build a sequence of pairs of squarefree integers \((t_n^+,t_n^-)\), that may be finite or even void, in the following way: for any integer \(n\), we have %
\[
\lambda_{\f}(t_n^+)>0,\qquad \lambda_{\f}(t_n^-)<0, %
\]
\[%
\max(t_n^+,t_n^-)<\min(t_{n+1}^+,t_{n+1}^-),
\]
and %
\(\lambda_{\f}(t)=0\) for all squarfree integer \(t\) between \(t_n^{+}\) and \(t_n^{-}\).
The number of squarefree sign changes of \(\lambda_{\f}\) is the function defined by %
\[
\cC_{\f}(x)=\#\left\{n\geq 1\colon \max(t_n^+,t_n^-)\leq x\right\}.
\]

\begin{thm}\label{thm2}
Let \(\ell\geq 4\)  be a positive integer and \(\f\in\cush\) be an eigenform of all the Hecke operators
such that the $\lambda_{\f}(n)$ are real for all $n\ge 1$. %
For any \(\epsilon>0\), the number of squarefree sign changes of \(\lambda_\f\) satisfies
\[%
\cC_\f(x)\gg_{\f,\epsilon} x^{\frac{1-4\alpha}{5}-\epsilon}
\]
for all \(x\geq x_0(\f,\eta)\), where the constant \(x_0(\f,\eta)\) and the implied constant depends on \(\f\) and \(\epsilon\).
\end{thm}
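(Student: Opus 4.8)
The plan is to localise the method behind Theorem~\ref{thm1} to short intervals. If one can show that every window $(y,y+H]$ with $y\le x$ and a suitably chosen $H=H(x)$ contains a squarefree sign change, then running through $\asymp x/H$ disjoint windows in $(0,x]$ immediately gives $\cC_\f(x)\gg x/H$. Everything therefore reduces to taking $H$ as small as possible while retaining this property, and the target exponent is nothing but the exponent of $x/H$ at the optimal scale: I expect $H\asymp x^{4(1+\alpha)/5+\epsilon}$, which yields $x/H\asymp x^{(1-4\alpha)/5-\epsilon}$.

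Two local estimates drive the argument. First I would establish a short-interval second-moment lower bound
\[
\sumsqf_{y<t\le y+H}\lambda_\f(t)^2\gg_\f H,
\]
valid once $H$ exceeds a fixed power of $x$. Through Waldspurger's formula \eqref{WaldspurgerFormula} this is an average of the central values $L(\tfrac12,\Sh_t\f,\chi_t)$ over squarefree $t$ in a window; its main term of size $\asymp H$ comes from the diagonal and is governed by the degree-three symmetric square $L$-function of the Shimura lift, which is the source of the $5$ in the denominator of the final exponent. Second I would prove a short-interval cancellation bound for the signed sum $\sumsqf_{y<t\le y+H}\lambda_\f(t)$, obtained from the analytic continuation, functional equation and convexity estimates for the associated Dirichlet series, together with a Möbius sieve over square divisors to isolate the squarefree $t$.

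These combine as follows. On a window $I=(y,y+H]$ carrying no sign change the $\lambda_\f(t)$ keep a constant sign, so
\[
\abs*{\,\sumsqf_{t\in I}\lambda_\f(t)\,}=\sumsqf_{t\in I}\abs{\lambda_\f(t)}\ge\frac{\sumsqf_{t\in I}\lambda_\f(t)^2}{\max_{t\in I}\abs{\lambda_\f(t)}}\gg_\f\frac{H}{x^{\alpha}},
\]
where the last step uses the second-moment bound and the pointwise estimate \eqref{defvartheta}. Comparing this with the cancellation bound forces a contradiction once $H$ passes a threshold of the shape $x^{4(1+\alpha)/5}$, in which the factor $x^{\alpha}$ produced by \eqref{defvartheta} and the degree-three error term assemble into the exponent $\tfrac{4(1+\alpha)}{5}$. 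Hence every window of that length must contain a sign change, and summing over the $\asymp x^{(1-4\alpha)/5}$ disjoint such windows gives the claimed bound, with the $\epsilon$-loss coming from taking $H$ slightly above the threshold.

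The principal obstacle is the short-interval second moment with an error term strong enough to survive at the scale $H\asymp x^{4(1+\alpha)/5}$. One must evaluate $\sumsqf_{t}L(\tfrac12,\Sh_t\f,\chi_t)$ over a short window of squarefree $t$, carry out the squarefree sieve, and bound the off-diagonal contribution using convexity---or, where available, subconvexity---for the symmetric square; the pointwise bound \eqref{defvartheta} is then invoked to absorb precisely those ranges that the analytic estimate cannot reach, and it is this absorption that makes $\alpha$ enter the exponent. Optimising the single free parameter $H$ against the two competing inputs is what pins down $\tfrac{1-4\alpha}{5}$, and making the two error terms genuinely admissible at that scale will be the delicate part of the work.
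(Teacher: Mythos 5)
Your argument hinges on a single\hyphenation{single}-window squarefree second moment
\(\sumsqf_{y<t\le y+H}\lambda_{\f}(t)^2\gg H\),
and that is precisely the estimate that is not available --- this is the central obstruction the actual proof is built to avoid. The only Rankin--Selberg series with good analytic continuation is \(D(\f\otimes\overline{\f},s)\), which runs over \emph{all} integers \(n\); to sieve down to squarefree \(t\) you must subtract the contribution of \(n=tm^2\), \(m\ge 2\), and the only control on \(\lambda_{\f}(tm^2)\) is Lemma~\ref{lemma6} combined with \eqref{defvartheta}, giving \(\abs*{\lambda_{\f}(tm^2)}\ll t^{\alpha}\tau(m)\psi(m)\). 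The subtracted term is then of size up to
\(\sum_{m\ge 2}\bigl(H/m^2+1\bigr)(y/m^2)^{2\alpha}m^{\epsilon}\asymp Hy^{2\alpha}\),
which swamps the hoped-for main term \(H\) for every \(\alpha>0\); your alternative route via Waldspurger, averaging \(L(\tfrac12,\Sh_t\f,\chi_t)\) over a window of length \(x^{4(1+\alpha)/5}\) around \(x\), is just the same unproven short-interval moment in different clothing. The paper gets around this with a \emph{bundle} of short intervals: from the long-interval asymptotic \eqref{eq_kolm} (valid since \(\eta>3/4+\alpha\)) one has \(\sum_{x<n\le x+h}\lambda_{\f}(n)^2\gg h\); writing \(n=tm^2\) and showing the tail \(m>A\) contributes \(\ll x^{\eta}A^{-1+\epsilon}\) (a four-range case analysis), the squarefree second moment over the \emph{finite family} of windows \(\bigl]x/m^2,(x+h)/m^2\bigr[\), \(m\le A\), is \(\gg h\). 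The price is that one can only conclude that \emph{some} window of the bundle contains a sign change --- whence the sign-detection sum with an arbitrary sign vector \((\epsilon_1,\dotsc,\epsilon_A)\in\{-1,1\}^A\) in \eqref{eq_momentun}, and the adapted final count over the disjoint intervals \(I_j(m)\), still giving \(J\gg X^{1-\eta}\) sign changes.

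Your exponent bookkeeping is also internally inconsistent, which is a symptom of the same gap. With your claimed inputs --- unweighted second moment \(\gg H\) and cancellation \(\ll H^{3/4}x^{\epsilon}\) --- your comparison \(Hx^{-\alpha}\gg H^{3/4}x^{\epsilon}\) would already force sign changes at the scale \(H\asymp x^{4\alpha+\epsilon}\), i.e.\ \(\cC_{\f}(x)\gg x^{1-4\alpha-\epsilon}\), far stronger than the theorem; that the theorem only reaches \((1-4\alpha)/5\) should have warned you the inputs are unattainable. The true source of the \(5\) is not a degree-three symmetric square \(L\)-function: the first moment \eqref{eq_momun} is produced by Soundararajan's kernel \eqref{eq_Sounddevice}, whose weight
\(\min\bigl(\log\tfrac{x+h}{tm^2},\log\tfrac{tm^2}{x}\bigr)\asymp h/x\)
on the middle of the window degrades the matching weighted second moment to \(\gg h^2/x\) \eqref{eq_momentdeux}; balancing \(x^{-\alpha}h^2/x\) against \(h^{3/4}\) --- the \(3/4\) coming from the holomorphy of \(M(\f,s)\) in \(\Re s>3/4\) (Lemma~\ref{lemMsf}, due to Hulse--Kiral--Kuan--Lim) --- gives the threshold \(\eta>\tfrac{4}{5}(1+\alpha)\) and hence \(\cC_{\f}(x)\gg x^{(1-4\alpha)/5-\epsilon}\). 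So beyond the missing second-moment input, your sketch also omits the smoothing loss \(h/x\) that any workable version of the argument must carry.
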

\begin{rem}
In particular, the Conrey \& Iwaniec bound leads to
\[
\cC_\f(x)\gg_{\f,\epsilon} x^{\frac{1}{15}-\epsilon}
\]
for all \(x\geq x_0(\f, \epsilon)\).
\end{rem}
\subsection{Methods}

To prove Theorem~\ref{thm1}, we detect signs with %
\[%
\frac{\abs{\lambda_{\f}(t)}+\lambda_{\f}(t)}{2}=\begin{cases}\lambda_{\f}(t) &\text{if \(\lambda_{\f}(t)>0\)}\\
0 &\text{otherwise.}\end{cases}
\]
Bounding the Fourier coefficients with (\ref{defvartheta}), we get plainly %
\begin{equation}\label{eq_sarm}%
\sumsqf_{t\leq x}\left(\abs*{\lambda_{\f}(t)}+\lambda_{\f}(t)\right)\log\left(\frac{x}{t}\right)\ll_{\f,\alpha}\cT_{\f}^+(x)x^\alpha\log x%
\end{equation}
(recall that the letter \(t\) is for squarefree integers hence the sum is restricted to squarefree integers). %
Then we use the analytic properties of the Dirichlet series %
\[
M(\f,s)=\sumsqf_{t\leq x}\lambda_{\f}(t)t^{-s}
\qquad\text{and}\qquad 
D(\f\otimes\overline{\f},s)=\sum_{n\geq 1}\lambda_{\f}(n)^2n^{-s}
\]
in Lemma~\ref{lemMsf} and Proposition~\ref{prop_analyticsLdeux} of \S\ref{ssec_analysis} to make an auxiliary tool -- Lemma~\ref{lem_interus}. (Note that Lemma~\ref{lemMsf} is due to  \cite{MR2904928}.)  More precisely, we utilize that the Dirichlet series defining \(M(\f,s)\) and \(D(\f\otimes\overline{\f},s)\) are absolutely convergent for \(\Re s>1\). The function \(M(\f,s)\) has an analytic continuation to \(\Re s>3/4\) whereas the function \(D(\f\otimes\overline{\f},s)\) has a meromorphic continuation to \(\Re s>1/2\) with a unique pole; this pole is at \(1\) and it is simple. Thus we can easily derive Lemma~\ref{lem_interus} and then the lower bound %
\begin{equation*}%
\sumsqf_{t\leq x}\left(\abs*{\lambda_{\f}(t)}+\lambda_{\f}(t)\right)\log\left(\frac{x}{t}\right)\gg x^{1-\alpha}. %
\end{equation*}
Theorem~\ref{thm1} follows readily. 

Theorem~\ref{thm2} rests on the following delicate device of Soundararajan~\cite{Sound11}: let \(c>0\) and \(\delta>0\), then %
\begin{multline}\label{eq_Sounddevice}%
\frac{1}{2\pi\ic}\int_{c-\ic\infty}^{c+\ic\infty}\frac{(\ee^{\delta s}-1)^2}{s^2}\xi^s\dd s\\=\begin{cases}
\min\left(\log\left(\ee^{2\delta}\xi\right),\log\left(1/\xi\right)\right)&\text{if \(\ee^{-2\delta}\leq\xi\leq 1\)}\\
0 & \text{otherwise.}
\end{cases}
\end{multline}
(Thanks to the referee for suggesting this device.)
Using it with the analytic properties of \(M(\f,s)\) and \(D(\f\otimes\overline{\f},s)\), some weighted first and second moments on short intervals are evaluated. We use these moments to detect the sign changes via the positivity of %
\[%
\sum_{m\leq A}\sumsqf_{\frac{x}{m^2}<t<\frac{x+h}{m^2}}\left(\abs*{\lambda_{\f}(t)}+\epsilon_m\lambda_{\f}(t)\right)\min\left(\log\left(\frac{x+h}{tm^2}\right),\log\left(\frac{tm^2}{x}\right)\right) %
\]
for all \((\epsilon_1,\dotsc,\epsilon_A)\in\{-1,1\}^A\).

The paper is organized as follows. Section~\ref{sec_background} is devoted to the background on half-integral weight modular forms (\S\ref{ssec_half}) and the establishment of the analytic properties for the Dirichlet series we need (\S\ref{ssec_analysis}). Theorem~\ref{thm1} is proven in Section~\ref{sec_thm1}. Theorem~\ref{thm2} is proven in Section~\ref{sec_thm2}.

\subsection*{Acknowledgement} 
{We express our hearty gratitude to the anonymous referee for his/her insightful advice that led to the current much better version of Theorem~\ref{thm2} as well as the helpful comments on presentation.  The preliminary form of this paper was finished during the visit of E. Royer and J. Wu at Hong Kong University in 2014. They would like to thank the department of mathematics for hospitality and excellent working conditions. }

\section{Background}\label{sec_background}
\subsection{Modular forms of half-integral weight}\label{ssec_half}
In this section, we want to recall the basic facts we need on modular forms of half-integral weight on the congruence subgroup \(\Gamma_0(4)\). All the content of this section is classical and is to be found in the main references \cite{MR0332663} and \cite{MR2020489}. It contains however the very few that the non-specialist reader will need. %

The theta function is defined on the upper half plane \(\pk\) by %
\[%
\theta(z)=1+2\sum_{n=1}^{+\infty}\ee(n^2z) %
\]
for any \(z\in\pk\). Since the \(\theta\) function does not vanish on \(\pk\), we can define the theta multiplier: for any \(\gamma\in\Gamma_0(4)\) and \(z\in\pk\), let %
\[%
j(\gamma,z)=\frac{\theta(\gamma z)}{\theta(z)}. %
\]
If \(\gamma=\bigl(\begin{smallmatrix}a & b\\ c & d\end{smallmatrix}\bigr)\), it can be shown that \(j(\gamma,z)^2=cz+d\). For any complex number \(\xi\), let \(\xi^{1/2}\) denote \(\abs{\xi}^{1/2}\ee^{\ic\arg(\xi)/2}\) where \(-\pi<\arg(\xi)\leq\pi\). The coefficient \(j(\gamma,z)/(cz+d)^{1/2}\) is called the theta multiplier. It does not depend on \(z\) and can be explicitly described in terms of \(c\) and \(d\) (see, for example, \cite[\S 2.8]{MR1474964}). 

Let \(\ell\) be a non negative integer. A modular form of weight \(\ell+1/2\) is a holomorphic function \(\f\) on \(\pk\) satisfying %
\[%
\f(\gamma z)=j(\gamma,z)^{2\ell+1}\f(z) %
\]
for all \(\gamma\in\Gamma_0(4)\) and \(z\in\pk\), and that is holomorphic at the cusps of \(\Gamma_0(4)\). If moreover \(\f\) vanishes at the cusps of \(\Gamma_0(4)\), then \(\f\) is called a cusp form of weight \(\ell+1/2\). The congruence subgroup has three cusps: \(0\), \(-1/2\) and \(\infty\). The corresponding scaling matrices are respectively %
\[%
\sigma_0=\begin{pmatrix}0 & -1/2\\ 2 & 0\end{pmatrix},\ \sigma_{-1/2}=\begin{pmatrix}1 & 0\\ -2 & 1\end{pmatrix}\quad\text{and} \quad\sigma_\infty=\begin{pmatrix}1 & 0\\ 0 & 1\end{pmatrix}.
\]
Then, if \(\f\) is a cusp form of weight \(\ell+1/2\), the following functions have a Fourier expansion vanishing at \(\infty\): %
\[%
\f\vert_{\sigma_{0}}(z)=(2z)^{-\ell-1/2}\f\left(-\frac{1}{4z}\right)%
\quad\text{and}\quad
\f\vert_{\sigma_{-1/2}}(z)=(-2z+1)^{-\ell-1/2}\f\left(-\frac{1}{2z-1}\right). %
\]
We shall write %
\begin{equation}\label{eq_newfive}%
\f(z)=\sum_{n=1}^{+\infty}\wh{\f}(n)\ee(nz)%
\end{equation}
for the Fourier expansion of \(\f\). The set \(\cush\) of modular forms of weight \(\ell+1/2\) is a finite dimensional vector space over \(\CC\). If \(\ell\leq 3\), then \(\cush=\{0\}\). In the following, we shall assume \(\ell\geq 4\).

Shimura established a correspondence between half-integral cusp forms and integral weight cusp forms on a congruence subgroup. Niwa~\cite{MR0364106} gave a more direct proof of this correspondence and lowered the level of the congruence group involved. Fix a squarefree integer \(t\). We write \(\chi_0\) for the principal character of modulus \(2\) and define a character \(\chi_t\) by %
\[%
\chi_t(n)=\chi_0(n)\left(\frac{-1}{n}\right)^\ell\left(\frac{t}{n}\right).
\]
Let \(\f\in\cush\). Then, the Dirichlet series defined by the product%
\begin{equation}\label{eq_ShiCorUn}%
L(\chi_t,s-\ell+1)\sum_{n=1}^{+\infty}\frac{\wh{\f}(tn^2)}{n^s} %
\end{equation}
is the Dirichlet series of a cusp form of integral weight \(2\ell\) over the congruence subgroup \(\Gamma_0(2)\). We denote by \(\Sh_t\f\) this cusp form and \(\cusp\) the vector space of cusp forms of weight \(2\ell\) over \(\Gamma_0(2)\). At this point, the dependence in \(t\) of \(\Sh_t\f\) is not really clear. It will become clearer after we introduce the Hecke operators. %

The Hecke operator of half-integral weight \(\ell+1/2\) and order \(p^2\) is the linear endomorphism \(\he\) on \(\cush\) that sends any cusp form with Fourier coefficients \((\wh{\f}(n))_{n\geq 1}\) to the cusp form with Fourier coefficients defined by %
\[%
\wh{\he(\f)}(n)=\wh{\f}(p^2n)+\chi_0(p)\left(\frac{(-1)^\ell n}{p}\right)p^{\ell-1}\wh{f}(n)+\chi_0(p)p^{2\ell-1}\wh{\f}\left(\frac{n}{p^2}\right).
\]
If \(n/p^2\) is not an integer, then \(\wh{\f}(n/p^2)\) is considered to be \(0\). Hecke operators and the Shimura correspondence commute, meaning that if \(T_p\) is the Hecke operator of order \(p\) over \(\cusp\), then %
\[%
\Sh_t(\he\f)=T_p(\Sh_t\f)%
\]
for any \(\f\in\cush\). In particular, if \(\f\) is an eigenform of \(\he\), then \(\Sh_t\f\) is an eigenform of \(T_p\) with same eigenvalue. Let \(\f\) be an eigenform of all the Hecke operators \(\he\): denote by \(w_p\) the corresponding eigenvalue. One has %
\begin{equation}\label{eq_ShiCorDeux}%
L(\chi_t,s-\ell+1)\sum_{n=1}^{+\infty}\frac{\wh{\f}(tn^2)}{n^s}=\wh{\f}(t)\prod_{p}\left(1-\frac{\omega_p}{p^s}+\frac{\chi_0(p)}{p^{2s-2\ell+1}}\right)^{-1} %
\end{equation}
the product being over all prime numbers. This product is the \(L\)-function of a cusp form in \(\cusp\). We denote by \(\Sh\f\) this cusp form. Remark that it does not depend on \(t\) and that \(\Sh_t\f=\wh{\f}(t)\Sh\f\). 

Let \(\psi\) be the arithmetic function defined by %
\[%
\psi(n)=\prod_{p\mid n}\left(1+p^{-1/2}\right)
\]
the product being on prime numbers. We write \(\tau\) for the divisor function and clearly \(\psi(n)\leq\tau(n)\) for every \(n\in\N^*\). The next Lemma improves slightly Lemma 4.1 in~\cite{MR2904928}. %
\begin{lem}\label{lemma6}
Let \(\f\in\cush\) be an eigenform of all the Hecke operators \(\he\). There exists a constant \(C>0\) such that, for any squarefree integer \(t\) and any integer \(n\) we have %
\[
\abs*{\wh{\f}(tn^2)}\leq C\abs*{\wh{\f}(t)}n^{\ell-1/2}\tau(n)\psi(n). %
\]
\end{lem}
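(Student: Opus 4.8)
The plan is to read off everything from the Euler product already established in \eqref{eq_ShiCorDeux}, which encodes all the multiplicativity we need. First I would dispose of a degenerate case: if \(\wh{\f}(t)=0\), then \eqref{eq_ShiCorDeux} forces the Dirichlet series \(\sum_{n}\wh{\f}(tn^2)n^{-s}\) to vanish identically (the factor \(L(\chi_t,s-\ell+1)\) is not the zero function), whence \(\wh{\f}(tn^2)=0\) for every \(n\) and the claimed inequality is trivial. So I assume \(\wh{\f}(t)\neq 0\) and set \(b(n):=\wh{\f}(tn^2)/\wh{\f}(t)\).

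Next I would divide \eqref{eq_ShiCorDeux} by \(\wh{\f}(t)\) and move \(L(\chi_t,s-\ell+1)\) to the right. Using \(1/L(\chi_t,s-\ell+1)=\prod_p\bigl(1-\chi_t(p)p^{\ell-1}p^{-s}\bigr)\), this yields the Euler product
\[
\sum_{n\ge 1}\frac{b(n)}{n^s}=\prod_p\frac{1-\chi_t(p)p^{\ell-1}p^{-s}}{1-\omega_p p^{-s}+\chi_0(p)p^{2\ell-1}p^{-2s}},
\]
so in particular \(b\) is multiplicative and it suffices to bound each \(\lvert b(p^k)\rvert\). Writing the denominator as \((1-\alpha_p p^{-s})(1-\beta_p p^{-s})\) with \(\alpha_p+\beta_p=\omega_p\) and \(\alpha_p\beta_p=\chi_0(p)p^{2\ell-1}\), and expanding the local factor, I obtain for every prime \(p\) and \(k\ge 1\)
\[
b(p^k)=\lambda(p^k)-\chi_t(p)p^{\ell-1}\lambda(p^{k-1}),\qquad \lambda(p^k):=\sum_{j=0}^k\alpha_p^{\,j}\beta_p^{\,k-j},
\]
where \(\lambda\) is the multiplicative function attached to the integral weight eigenform \(\Sh\f\in\cusp\).

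The key input is Deligne's bound for the Hecke eigenform \(\Sh\f\) of weight \(2\ell\): at the good primes \(p\neq 2\) one has \(\lvert\alpha_p\rvert=\lvert\beta_p\rvert=p^{\ell-1/2}\), hence \(\lvert\lambda(p^k)\rvert\le (k+1)p^{k(\ell-1/2)}\). Combining this with \(\lvert\chi_t(p)\rvert\le 1\) gives
\[
\lvert b(p^k)\rvert\le p^{k(\ell-1/2)}\bigl[(k+1)+kp^{-1/2}\bigr]\le p^{k(\ell-1/2)}(k+1)\bigl(1+p^{-1/2}\bigr)=p^{k(\ell-1/2)}\tau(p^k)\psi(p^k),
\]
the middle inequality being just \(k\le k+1\). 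At the ramified prime \(p=2\) one has \(\chi_t(2)=0\) (the principal character \(\chi_0\) modulo \(2\) vanishes there) and the local factor is linear, so \(b(2^k)=\omega_2^{\,k}\); the bound \(\lvert\omega_2\rvert\le 2^{\ell-1}\) for the Hecke eigenvalue at the ramified prime gives \(\lvert b(2^k)\rvert\le 2^{k(\ell-1)}\le 2^{k(\ell-1/2)}\tau(2^k)\psi(2^k)\). Multiplying these local estimates over all primes, and using the multiplicativity of \(b\), of \(\tau\), of \(\psi\) and of \(n\mapsto n^{\ell-1/2}\), yields \(\lvert b(n)\rvert\le C\,n^{\ell-1/2}\tau(n)\psi(n)\), which is the assertion after multiplying through by \(\lvert\wh{\f}(t)\rvert\).

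The only genuine obstacle is organising the local computation and invoking Deligne's theorem correctly for \(\Sh\f\) (together with the separate, easier treatment of the bad prime \(2\) and of the degenerate case \(\wh{\f}(t)=0\)); the gain of the sharp factor \(\psi\) over a cruder \(\tau^2\)-type bound comes entirely from the elementary inequality \((k+1)+kp^{-1/2}\le (k+1)(1+p^{-1/2})\), which reflects that the numerator parameter \(\chi_t(p)p^{\ell-1}\) is smaller than the Satake parameters by a factor \(p^{-1/2}\).
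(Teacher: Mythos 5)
Your argument is essentially the paper's proof reorganised locally, not a genuinely different one. The paper M\"obius-inverts \eqref{eq_ShiCorDeux} into the single convolution identity \eqref{eq_uks},
\[
\wh{\f}(tn^2)=\wh{\f}(t)\sum_{d\mid n}\chi_t\left(\frac{n}{d}\right)\mu\left(\frac{n}{d}\right)\left(\frac{n}{d}\right)^{\ell-1}\wh{\Sh\f}(d),
\]
applies the Deligne estimate \(\abs{\wh{\Sh\f}(d)}\leq Cd^{\ell-1/2}\tau(d)\) of \eqref{eq_kaks}, and concludes with the elementary bound \(\sum_{d\mid n}\abs{\mu(n/d)}d^{1/2}\tau(d)\leq n^{1/2}\tau(n)\psi(n)\). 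At \(n=p^k\) the identity \eqref{eq_uks} reads \(b(p^k)=\wh{\Sh\f}(p^k)-\chi_t(p)p^{\ell-1}\wh{\Sh\f}(p^{k-1})\), which is exactly your formula \(b(p^k)=\lambda(p^k)-\chi_t(p)p^{\ell-1}\lambda(p^{k-1})\), and the paper's divisor-sum estimate at \(p^k\) is precisely your inequality \((k+1)+kp^{-1/2}\leq(k+1)(1+p^{-1/2})\). So your Euler-product computation at the good primes is the same calculation, organised multiplicatively; your version makes the origin of the factor \(\psi\) more transparent, while the paper's global version buys uniformity: invoking \eqref{eq_kaks} for all \(d\) at once (even \(d\) included) removes any case distinction at \(p=2\), and \eqref{eq_uks} holds trivially when \(\wh{\f}(t)=0\), so the degenerate case you treat separately (correctly, as it happens) needs no attention.

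There is, however, one erroneous step, at the ramified prime: the asserted bound \(\abs{\omega_2}\leq 2^{\ell-1}\) is \emph{not} true in general. It holds exactly when \(\Sh\f\) is new at \(2\) (then \(\omega_2=\pm 2^{\ell-1}\)), but nothing guarantees newness, and the bound genuinely fails in a typical case: if \(\f\) lies in the Kohnen plus space, the Shimura lift arises from level \(1\), so \(\Sh\f\) is old at \(2\) and \(\omega_2\) is a root of the level-one Hecke polynomial \(X^2-a_2X+2^{2\ell-1}\), whence \(\abs{\omega_2}=2^{\ell-1/2}>2^{\ell-1}\). The slip is repairable without changing your structure: the old/new dichotomy on \(\cusp\) gives \(\abs{\omega_2}\leq 2^{\ell-1/2}\) in all cases, which still yields \(\abs{b(2^k)}=\abs{\omega_2}^k\leq 2^{k(\ell-1/2)}\leq 2^{k(\ell-1/2)}\tau(2^k)\psi(2^k)\); alternatively, simply apply the same Deligne-type estimate \eqref{eq_kaks} to \(\wh{\Sh\f}(2^k)=\omega_2^k\), getting \(\abs{b(2^k)}\leq C(k+1)2^{k(\ell-1/2)}\), which costs only the single constant \(C\) since just one prime is affected. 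With that correction your proof is complete and delivers the lemma.
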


\begin{proof}
From~\eqref{eq_ShiCorDeux} we get %
\begin{equation}\label{eq_uks}%
\wh{\f}(tn^2)=\wh{\f}(t)\sum_{d\mid n}\chi_t\left(\frac{n}{d}\right)\mu\left(\frac{n}{d}\right)\left(\frac{n}{d}\right)^{\ell-1}\wh{\Sh\f}(d). %
\end{equation}
By the Deligne estimate, there exists \(C>0\) such that %
\begin{equation}\label{eq_kaks}%
\abs*{\wh{\Sh\f}(d)}\leq Cd^{(2\ell-1)/2}\tau(d) %
\end{equation}
for any \(d\).  It follows from~\eqref{eq_uks} and~\eqref{eq_kaks} that %
\[%
\abs*{\wh{\f}(tn^2)}\leq C\abs*{\wh{\f}(t)}n^{\ell-1}\sum_{d\mid n}\abs*{\mu\left(\frac{n}{d}\right)}d^{1/2}\tau(d)\leq C\abs*{\wh{\f}(t)}n^{\ell-1/2}\tau(n)\psi(n). %
\]
\end{proof}
The size of the Fourier coefficients of a half integral weight modular form is therefore controlled by the size of its Fourier coefficients at squarefree integers. Deligne's bound for integral weight modular forms does not apply, although it conjecturally does. Let \(\alpha\) be a positive real number such that, if \(\f\in\cush\), then %
\[
\abs{\wh{\f}(t)}\leq Ct^{(\ell+1/2-1)/2+\alpha}%
\]
for any squarefree integer \(t\) (and \(C\) is a real number depending only on \(\f\) and \(\alpha\)). Ramanujan-Petersson conjecture asserts that \(\alpha\) can be taken arbitrarily small. The best proven result is due to Conrey \& Iwaniec~\cite{MR1779567} (see also the Appendix by Mao in~\cite{MR2296066} for an uniform value of \(C\)). Their result implies that we can take \(\alpha=1/6+\epsilon\) with any real positive \(\epsilon\).  If \(\f\in\cush\) is an eigenform of all the Hecke operators, we have by comparison of~\eqref{eq_newone} and~\eqref{eq_newfive} %
\[%
\lambda_{\f}(n)=\frac{\wh{\f}(n)}{n^{(\ell+1/2-1)/2}}.
\]
For any squarefree integer \(t\) and integer \(n\), we have then%
\begin{equation}\label{eq_bound}
\abs*{\lambda_{\f}(tn^2)}\leq C_1\abs*{\lambda_{\f}(t)}\tau(n)\psi(n)\leq C_2t^\alpha\tau(n)\psi(n) %
\end{equation}
with the admissible choice \(\alpha= 1/6+\epsilon\),  where \(C_1\) and \(C_2\) are positive real numbers not depending on \(t\) or \(n\).
\subsection{Some associated Dirichlet series}\label{ssec_analysis}
Let \(\f\in\cush\), and assume it is an eigenform of all the Hecke operators. We define %
\begin{equation}\label{eq_defLdeux}%
D(\f\otimes\overline{\f},s)=\sum_{n=1}^{+\infty}\lambda_{\f}(n)^2n^{-s}.
\end{equation}
Write \(\sigma=\Re s\) and \(\tau={\rm Im}\, s\).\footnote{No confusion will arise with the divisor function \(\tau(n)\) from the context.} 
According to~\eqref{eq_bound}, we know it is absolutely convergent as soon as \(\sigma >1+2\alpha\). We state analytical informations on this function. The proof is quite standard, but since we have not found a handy proof  in the literature for this case, we provide the details for completeness. %
\begin{prop}\label{prop_analyticsLdeux}
Let \(\f\in\cush\), and assume it is an eigenform of all the Hecke operators. The Dirichlet series~\eqref{eq_defLdeux} converges absolutely as soon as \(\Re s>1\). It can be continued analytically to a meromorphic function in the half plane \(\Re s>\tfrac{1}{2}\)  with the only pole at \(s=1\) . This pole is simple. Further for any \(\epsilon>0\)  we have
\begin{equation}\label{ConvexityL2fs}
D(\f\otimes\overline{\f},s)\ll_{\f,\epsilon}\abs{\tau}^{2\max(1-\sigma,0) + \epsilon}\qquad\left(\tfrac{1}{2}+\epsilon\leq \sigma\leq 3, \, \abs{\tau}\ge 1\right).
\end{equation}
The implied constant depends on \(\f\)  and \(\epsilon\)  only.
\end{prop}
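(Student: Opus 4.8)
The plan is to realise \(D(\f\otimes\overline{\f},s)\) as a Rankin--Selberg integral against a real-analytic Eisenstein series and to read off all three assertions from the well-understood analytic properties of that Eisenstein series.

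First I would observe that \(|\f(z)|^{2}(\Im z)^{\ell+1/2}\) is \(\Gamma_{0}(4)\)-invariant: since \(j(\gamma,z)^{2}=cz+d\) gives \(|j(\gamma,z)|^{2}=|cz+d|\), the factor \(|cz+d|^{2\ell+1}\) produced by \(|\f(\gamma z)|^{2}\) is cancelled exactly by \((\Im\gamma z)^{\ell+1/2}=(\Im z)^{\ell+1/2}|cz+d|^{-(2\ell+1)}\). Hence, writing \(E(z,s)=\sum_{\gamma\in\Gamma_{\infty}\backslash\Gamma_{0}(4)}(\Im\gamma z)^{s}\) for the weight-zero Eisenstein series at the cusp \(\infty\), the integral
\[
I(s)=\int_{\Gamma_{0}(4)\backslash\pk}|\f(z)|^{2}(\Im z)^{\ell+1/2}E(z,s)\,\frac{\dd x\,\dd y}{y^{2}}
\]
converges for \(\Re s>1\) and, upon unfolding via \(\int_{0}^{1}|\f(x+\ic y)|^{2}\,\dd x=\sum_{n\ge 1}\wh{\f}(n)^{2}\ee^{-4\pi ny}\) and the Mellin transform of \(\ee^{-4\pi ny}\), becomes
\[
I(s)=\frac{\Gamma(s+\ell-\tfrac12)}{(4\pi)^{s+\ell-1/2}}\sum_{n\ge 1}\frac{\wh{\f}(n)^{2}}{n^{s+\ell-1/2}}=\frac{\Gamma(s+\ell-\tfrac12)}{(4\pi)^{s+\ell-1/2}}\,D(\f\otimes\overline{\f},s),
\]
where I used \(\lambda_{\f}(n)^{2}=\wh{\f}(n)^{2}n^{-(\ell-1/2)}\) together with the definition \eqref{eq_defLdeux}. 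Thus \(D(\f\otimes\overline{\f},s)\) differs from \(I(s)\) only by an explicit, pole-free and non-vanishing gamma factor.

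Next, the continuation and pole structure are inherited from \(E(z,s)\). The Eisenstein series for \(\Gamma_{0}(4)\) continues meromorphically to \(\CC\), and in \(\Re s>\tfrac12\) its only pole is the simple one at \(s=1\), with residue the constant \(\operatorname{vol}(\Gamma_{0}(4)\backslash\pk)^{-1}\). Because \(|\f|^{2}y^{\ell+1/2}\) is rapidly decreasing at the three cusps, \(I(s)\) is holomorphic for \(\Re s>\tfrac12\) apart from a simple pole at \(s=1\) whose residue is proportional to the Petersson norm \(\langle\f,\f\rangle>0\); dividing by the gamma factor transfers this to \(D(\f\otimes\overline{\f},s)\). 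Since the coefficients \(\lambda_{\f}(n)^{2}\) are non-negative, Landau's theorem forces the abscissa of (absolute) convergence to equal the location of the rightmost singularity, namely \(s=1\); this upgrades the crude range \(\Re s>1+2\alpha\) coming from \eqref{eq_bound} to absolute convergence for \(\Re s>1\).

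For the growth estimate \eqref{ConvexityL2fs} I would argue by Phragmén--Lindelöf. The functional equation \(s\mapsto 1-s\) of \(E(z,s)\) induces, through \(I(s)\), a functional equation of degree \(4\) for \(D(\f\otimes\overline{\f},s)\). On \(\sigma=1+\epsilon\) absolute convergence gives \(D\ll|\tau|^{\epsilon}\); the functional equation together with Stirling's formula (whose exponential decay of the gamma ratio is exactly what survives) yields \(D\ll|\tau|^{2+\epsilon}\) on \(\sigma=-\epsilon\); removing the pole at \(s=1\) with a factor \((s-1)\) and interpolating across the strip then produces \(D\ll|\tau|^{2(1-\sigma)+\epsilon}\) for \(\tfrac12+\epsilon\le\sigma\le 1\) and \(|\tau|\ge 1\), while for \(1\le\sigma\le 3\) the bound \(|\tau|^{\epsilon}\) is immediate from absolute convergence; the extra factor \(|s-1|\asymp|\tau|\) is harmless away from \(\tau=0\). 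The main obstacle is precisely this last step: one must make the matrix functional equation of \(E(z,s)\) for \(\Gamma_{0}(4)\) (which couples the three cusps through a scattering matrix) explicit and control it on vertical lines, verify the absence of further poles in \(\tfrac12<\sigma<1\), and track the gamma factors so that the polynomial bound on \(I(s)\) indeed survives division by \(\Gamma(s+\ell-\tfrac12)\). Everything else I expect to be routine.
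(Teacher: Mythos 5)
Your proposal is correct and follows essentially the same route as the paper: Rankin--Selberg unfolding of \(\lvert\f\rvert^{2}y^{\ell+1/2}\) against the weight-zero Eisenstein series, Landau's lemma for absolute convergence in \(\Re s>1\), the simple pole at \(s=1\) inherited from \(E(z,s)\), and Phragm\'en--Lindel\"of applied to the vector of the three cusp series completed by \((2\pi)^{-2s}\Gamma(s)\Gamma(s+\ell-\tfrac12)\zeta(2s)\), with the functional equation coupling the cusps through the explicit scattering matrix \(\Phi(s)=\frac{\Lambda(2s-1)}{\Lambda(2s)}\Psi(s)\) of \(\Gamma_0(4)\). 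The step you flag as the main obstacle is carried out in the paper exactly as you anticipate and is indeed routine; the only technicality you omit is the a priori order bound in the strip needed to legitimise the convexity principle, which the paper supplies in a footnote via the Fourier expansion of \(E_{\ca}(z,s)\).
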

\begin{proof}
Let \(\ca\) be a cusp of \(\Gamma=\Gamma_0(4)\). We denote by \(\Gamma_{\ca}\) its stability group, and by \(\sigma_{\ca}\) its scaling matrix (see~\cite[\S 2.3]{MR1474964}). The Eisenstein series associated to \(\ca\) is %
\begin{align*}%
E_{\ca}(z,s)&=\sum_{\gamma\in\quotientgauche{\Gamma_{\ca}}{\Gamma}}\Im(\sigma_{\ca}^{-1}\gamma z)^s\\
&=\sum_{\gamma\in\quotientgauche{\Gamma_\infty}{\Gamma}}\Im(\gamma \sigma_{\ca}^{-1}z)^s=E_{\infty}(\sigma_{\ca}^{-1}z,s).
\end{align*}
We take \(\{0, -1/2,\infty\}\) as a representative set of cusps and obtain %
\[%
E_0(z,s)=E_\infty\left(-\frac{1}{4z},s\right)\quad\text{and}\quad E_{-1/2}(z,s)=E_\infty\left(-\frac{z}{2z-1},s\right).
\]
These series converge absolutely for \(\Re s>1\) (see, for example~\cite[Theorem 2.1.1]{MR0429749}). Moreover, \(\f_{\vert\sigma_{\ca}}\) admits a Fourier expansion %
\[%
\f_{\vert\sigma_{\ca}}(z)=\sum_{n=1}^{+\infty}n^{(\ell+1/2-1)/2}\lambda_{\f,\ca}(n)\ee(nz). %
\]
Let %
\begin{equation}\label{eq_convolcusp}%
D(\f_\ca\otimes\overline{\f_\ca},s)=\sum_{n=1}^{+\infty}\abs*{\lambda_{\f,\ca}(n)}^2n^{-s}.
\end{equation}
Classically (see, for example, \cite[\S 13.2]{MR1474964}), we have %
\[%
(4\pi)^{s+\ell-1/2}\Gamma\left(s+\ell-\frac{1}{2}\right)D(\f_\ca\otimes\overline{\f_\ca},s)
=\int_{\quotientgauche{\Gamma}{\pk}}y^{\ell+1/2}\abs*{\f (z)}^2E_{\ca}(z,s)\frac{\dd x\dd y}{y^2} %
\]
for \(\Re s\) large enough. The right hand side provides an analytic continuation in the region \(\Re s>1\). By Landau Lemma, this implies that the Dirichlet series~\eqref{eq_convolcusp} is absolutely convergent for \(\Re s>1\). The general theory implies that \(s\mapsto E_{\ca}(z,s)\) has a meromorphic continuation to the whole complex plane and satisfies the functional equation %
\[%
\vec{E}(z,s)=\Phi(s)\vec{E}(z,1-s) %
\]
where \(\vec{E}\) is the transpose of \((E_\infty,E_{0},E_{-1/2})\) and \(\Phi=\left(\varphi_{\ca,\cb}\right)_{(\ca,\cb)\in\{\infty,0,-1/2\}^2}\) is the scattering matrix. Indeed, %
\[%
\varphi_{\ca,\cb}(s)=\pi^{1/2} \frac{\Gamma(s-\frac12)}{\Gamma(s)}\sum_{c>0}\sN(c)c^{-2s}%
\]
where \(\sN(c)\) is the number of \(d\), incongruent modulo \(c\) such that, there exist \(a\) and \(b\) satisfying %
\[%
\sigma_{\ca}\begin{pmatrix}a & b\\ c & d\end{pmatrix}\sigma_{\cb}^{-1}\in\Gamma_0(4).
\]
This leads to %
\begin{align*}%
\Phi(s)& =\frac{\Lambda(2s-1)}{\Lambda(2s)}\frac{2^{1-2s}}{2^{2s}-1}\begin{pmatrix}%
1 & 2^{2s-1}-1 & 2^{2s-1}-1\\
2^{2s-1}-1 & 1 & 2^{2s-1}-1\\
2^{2s-1}-1 & 2^{2s-1}-1 & 1
\end{pmatrix}\\ %
&=\frac{\Lambda(2s-1)}{\Lambda(2s)}\Psi(s), \mbox{ say}, 
\end{align*}
where  \(\Lambda(s)=\pi^{-s/2}\Gamma(s/2)\zeta(s)\). On the half plane \(\Re s\geq 1/2\), \(E_\ca\) and \(\varphi_{\ca,\ca}\) have the same poles of the same orders \cite[Theorems 4.4.2, 4.3.4, 4.3.5]{MR0429749}. The only pole on \(\Re s\geq 1/2\) is then \(s=1\) and it is simple. Note that this follows also from the general theory since we are working on a congruence subgroup (\cite[Theorem 11.3]{MR1942691}). %

Let \(\vec{L}(\f\otimes\overline{\f},s)\) be the transpose of %
\[%
\left(D(\f\otimes\overline{\f},s),D(\f_0\otimes\overline{\f_0},s),D(\f_{-1/2}\otimes\overline{\f_{-1/2}},s)\right) %
\]
and %
\[%
\vec{\Lambda}(\f,s)=(2\pi)^{-2s}\Gamma(s)\Gamma(s+\ell-1/2)\zeta(2s)\vec{L}(\f\otimes\overline{\f},s). %
\]
We proved that %
\begin{itemize}
\item \(\vec{\Lambda}(\f,s)=\Psi(s)\vec{\Lambda}(\f,1-s)\)
\item in the half plane \(\Re s\geq 1/2\), the function \(D(\f_\ca\otimes\overline{\f_\ca},s)\) has only a simple pole at \(s=1\).
\end{itemize}
Now, let \(\Abs{\cdot}\) denote the Euclidean norm in \(\R^3\). Using \(\Abs{D(\f_\ca\otimes\overline{\f_\ca},1+\varepsilon+\ic\tau)}\ll_{\f,\epsilon} 1\) for any \(\tau\in\R\) and any fixed \(\epsilon >0\), we deduce  %
\[%
\abs*{\zeta(-2\epsilon +2\ic\tau)}\cdot\Abs*{\vec{L}(\f\otimes\overline{\f},-\epsilon+\ic\tau)}\ll_{\f,\epsilon} (1+\abs{\tau})^{2+\epsilon}
\]
from the functional equation,  and the estimate
\[
\abs{\zeta(2s)}\cdot\Abs*{\vec{L}(\f\otimes\overline{\f},s)}\ll_{\f,\epsilon} (1+\abs{\tau})^{2(1-\sigma)+\epsilon}\quad (s= \sigma+\ic\tau, \ \sigma \in [0,1], \ \abs{\tau}\geq 1)
\]
by the standard argument with the convexity principle.\footnote{One needs the estimate \(\abs{\zeta(2s)}\cdot\Abs{\vec{L}(\f\otimes\overline{\f},s)}\ll\ee^{\ee^{\eta\abs{\tau}}}\) for some \(\eta>0\) in the strip so as to apply the convexity principle. This can be easily verified by the Fourier expansion of \(E_\ca(z,s)\) and \cite[(2.2.6)-(2.2.11)]{MR0429749}.}  This leads to the desired result.
\end{proof}

Another useful Dirichlet series is %
\begin{equation}\label{defMs}
M(\f,s)= \sumsqf_{t\geq 1} \lambda_{\f}(t)t^{-s}. %
\end{equation}
The series \(M(\f,s)\) is absolutely convergent for \(\Re s>1\) by the Cauchy-Schwarz inequality and Proposition~\ref{prop_analyticsLdeux}.
The next lemma is due to Hulse, Kiral, Kuan \& Lim \cite[Proposition 4.4]{MR2904928}.
\begin{lem}\label{lemMsf}
Let \(\ell\geq 4\) be a positive integer and \(\f\in\cush\) be an eigenform of all the Hecke operators. The series \(M(\f,s)\), given by \eqref{defMs}, converges for \(\Re s>\tfrac{3}{4}\). Further for any \(\epsilon>0\) we have
\begin{equation}\label{UBMs}
M(\f,\sigma+\ic\tau)\ll_{\f, \epsilon} (\abs{\tau}+1)^{\max(1-\sigma,0)+2\epsilon} \qquad (\tfrac{3}{4}+\epsilon\leq\sigma\leq 3, \abs{\tau}\geq 1)
\end{equation}
where the implied constant depends on \(\f\)  and \(\epsilon\)  only.
\end{lem}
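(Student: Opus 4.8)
The plan is to transport the analytic data already available for the Rankin--Selberg convolution and for the classical \(L\)-functions attached to \(\f\) onto the linear squarefree sum \(M(\f,s)\). I first dispose of the easy half of the range. From the simple pole of \(D(\f\otimes\overline{\f},s)\) at \(s=1\) furnished by Proposition~\ref{prop_analyticsLdeux}, a routine Tauberian (or truncated Perron) argument gives \(\sumsqf_{t\leq x}\lambda_{\f}(t)^2\ll x\); Cauchy--Schwarz then yields \(\sumsqf_{t\leq x}\abs*{\lambda_{\f}(t)}\ll x\), so that \(M(\f,s)\) converges absolutely and is \(\ll_{\f,\epsilon}1\) for \(\sigma\geq 1+\epsilon\). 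This already gives \eqref{UBMs} with exponent \(2\epsilon\) on the right end \(\sigma\geq 1\); the whole difficulty is to continue \(M(\f,s)\) past \(\sigma=1\) down to \(\sigma>\tfrac34\) and to control its growth there.

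The structural backbone is the Shimura relation. Normalising \eqref{eq_uks} one finds, for squarefree \(t\) and any \(n\geq 1\),
\[
\lambda_{\f}(tn^2)=\lambda_{\f}(t)\sum_{de=n}\mu(e)\chi_t(e)e^{-1/2}\lambda_{\Sh\f}(d),
\]
where \(\lambda_{\Sh\f}(d)=\wh{\Sh\f}(d)d^{-\ell+1/2}\) are the Deligne-normalised coefficients of the weight \(2\ell\) form \(\Sh\f\). Writing every \(N\geq 1\) uniquely as \(N=tn^2\) with \(t\) squarefree and summing turns the standard \(L\)-function \(\sum_{N}\lambda_{\f}(N)N^{-s}\) into
\[
\sum_{N\geq 1}\frac{\lambda_{\f}(N)}{N^{s}}=L(\Sh\f,2s)\,\widetilde{M}(\f,s),\qquad \widetilde{M}(\f,s)=\sumsqf_{t\geq 1}\frac{\lambda_{\f}(t)}{L(\chi_t,2s+\tfrac12)}\,t^{-s}.
\]
Here \(L(\Sh\f,w)=\sum_d\lambda_{\Sh\f}(d)d^{-w}\) is entire, is given by an Euler product that is absolutely convergent and zero-free for \(\Re w>1\), and obeys Deligne's bound, while the half-integral weight \(L\)-function on the left is classically entire with a functional equation \(s\leftrightarrow 1-s\) and, by Phragmén--Lindelöf, a convexity bound no worse than \(\abs{\tau}^{\max(1-\sigma,0)+\epsilon}\) for \(\tfrac12\leq\sigma\leq 1\). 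Dividing, the auxiliary series \(\widetilde{M}(\f,s)=\bigl(\sum_N\lambda_{\f}(N)N^{-s}\bigr)/L(\Sh\f,2s)\) inherits analytic continuation and the desired polynomial growth throughout \(\sigma>\tfrac12\), since there \(\Re(2s)>1\) and so \(1/L(\Sh\f,2s)\ll_{\f,\epsilon}1\).

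It remains to pass from \(\widetilde{M}\) to \(M\). Expanding \(L(\chi_t,2s+\tfrac12)^{-1}=\sum_{e\geq 1}\mu(e)\chi_t(e)e^{-2s-1/2}\) isolates the term \(e=1\), which is precisely \(M(\f,s)\), whence
\[
M(\f,s)=\widetilde{M}(\f,s)-\sum_{e\geq 2}\mu(e)e^{-2s-1/2}\sumsqf_{t\geq 1}\lambda_{\f}(t)\chi_t(e)t^{-s}.
\]
The inner sums are nothing but \(M\) for the quadratic twists of \(\f\): by quadratic reciprocity they are the squarefree linear sums attached to \(\f\) twisted by \(\bigl(\tfrac{\cdot}{e}\bigr)\), and they satisfy the very same estimate as \(M\) \emph{uniformly in \(e\)}, with the conductor \(e\) entering only polynomially. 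Fed back into the display, the rapid decay \(e^{-2s-1/2}\) makes the \(e\)-sum converge for \(\sigma>\tfrac34\) and closes the estimate on itself, yielding the continuation of \(M(\f,s)\) to \(\sigma>\tfrac34\) together with \eqref{UBMs}; convergence of the Dirichlet series there then follows by partial summation. For the uniform control of the twisted contributions I would use the mean-square information from \(D(\f\otimes\overline{\f},s)\) and a Cauchy--Schwarz step, and it is this step that produces the exponent \(\max(1-\sigma,0)+2\epsilon\) for \(M\), exactly half of the exponent \(2\max(1-\sigma,0)+\epsilon\) appearing for \(D\) in \eqref{ConvexityL2fs}.

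The crux, and the main obstacle, is precisely this last step. Unlike in the integral weight case, the sequence \(\bigl(\lambda_{\f}(t)\bigr)_{t}\) over squarefree \(t\) carries no Euler product: the half-integral Hecke operators \(\he\) relate \(\lambda_{\f}(tn^2)\) to \(\lambda_{\f}(t)\) but impose nothing multiplicative along squarefree arguments, so all cancellation in \(M(\f,s)\) must be manufactured analytically. The characters \(\chi_t\) couple the summation variable \(t\) to the auxiliary variable \(e\), so the twisted sums cannot be handled one term at a time; keeping their dependence on the twist uniform enough that summing over \(e\) neither destroys the convexity exponent nor moves the abscissa away from \(\tfrac34\) is where the real care is needed, and it is here that the precise balance between the convergence regions of \(L(\Sh\f,2s)\) and \(L(\chi_t,2s+\tfrac12)\) and the strength of the Rankin--Selberg second moment is decisive.
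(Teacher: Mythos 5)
Your reduction has a genuine circularity at its crux. The factorization \(\sum_N \lambda_{\f}(N)N^{-s} = L(\Sh\f,2s)\,\widetilde M(\f,s)\) is formally correct and does give continuation of \(\widetilde M\) to \(\Re s>\tfrac12\), but \(\widetilde M\) is not \(M\): unwinding \(L(\chi_t,2s+\tfrac12)^{-1}\) leaves, for every \(e\geq 2\), the twisted sums \(M_e(s)=\sumsqf_{t\geq 1}\lambda_{\f}(t)\chi_t(e)t^{-s}\), and these are objects of exactly the same nature and difficulty as \(M\) itself (up to elementary factors they are the squarefree linear sums of quadratic twists of \(\f\), which are forms of level growing with \(e\)). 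Asserting that they ``satisfy the very same estimate uniformly in \(e\)'' is precisely the statement to be proven; there is no base case and no contraction that would let the estimate ``close on itself.'' Worse, the tool you propose for the uniform control --- the mean-square information from Proposition~\ref{prop_analyticsLdeux} plus Cauchy--Schwarz --- demonstrably cannot reach below \(\Re s=1\): any such argument passes through \(\sumsqf_{t\leq x}\abs*{\lambda_{\f}(t)}\), and since \(\sumsqf_{t\leq x}\lambda_{\f}(t)^2\asymp x\) together with \(\abs{\lambda_{\f}(t)}\ll t^{\alpha}\) forces \(\sumsqf_{t\leq x}\abs*{\lambda_{\f}(t)}\gg x^{1-\alpha}\), the series \(\sumsqf\abs*{\lambda_{\f}(t)}t^{-\sigma}\) diverges for \(\sigma\leq 1-\alpha\); with the Conrey--Iwaniec value \(\alpha=\tfrac16+\epsilon\) this barrier sits at \(\tfrac56>\tfrac34\). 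The cancellation must be manufactured from modularity, not from mean-square bounds, and your proposal never supplies it.

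It is worth contrasting this with what the paper actually does, following \cite[Proposition 4.4]{MR2904928}. Squarefreeness is detected by \(\mu(m)^2=\sum_{r^2\mid m}\mu(r)\), giving \(M(\f,s)=\sum_r\mu(r)D_r(s)\) with \(D_r\) the coefficient sum over \(m\equiv 0\pmod*{r^2}\); the congruence is opened with \emph{additive} characters, so each \(D_r\) becomes a combination of additively twisted period integrals \(\Lambda(\f,u/d,s)\), whose functional equations follow directly from the \(\Gamma_0(4)\)-modularity of \(\f\) --- your own (correct) observation that \((\lambda_{\f}(t))_t\) carries no Euler product is exactly why additive twists, rather than the multiplicative quadratic twists you introduce, are the workable tool here. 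Phragm\'en--Lindel\"of then yields \(D_r(\sigma+\ic\tau)\ll_{\f,\epsilon}(1+\abs{\tau})^{1-\sigma+\epsilon}r^{2-4\sigma+\epsilon}\), and the sum over \(r\) converges precisely for \(\sigma>\tfrac34\): this is where the abscissa \(\tfrac34\) in the lemma comes from. In your scheme its provenance is never pinned down --- you would need \(M_e(s)\ll e^{\theta}\) with \(\theta<2\sigma-\tfrac12\) uniformly, and proving that for the full family of twists with polynomial conductor dependence is strictly harder than the lemma itself.
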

\begin{proof}
We only sketch the proof since it is nearly the same as in \cite[Proposition 4.4]{MR2904928}. By the relation %
\[%
\mu(m)^2=\sum_{r^2\mid m}\mu(r) %
\]
we have 
\begin{equation}\label{eq_MDr}%
M(\f,s)=\sum_{r=1}^{+\infty}\mu(r)D_r(s) %
\end{equation}
where %
\[%
D_r(s)=\sum_{\mathclap{\substack{m=1\\m\equiv 0\pmod*{r^2}}}}^{+\infty}{\lambda_{\f}(m)}m^{-s}.
\]
This series is absolutely convergent for \(\Re s>1\) by Cauchy-Schwarz inequality and Proposition~\ref{prop_analyticsLdeux}. Then, introducing additive characters to remove the congruence condition and applying the Mellin transform, we get %
\[%
D_r(s)=\frac{(2\pi)^{s+(\ell+1/2-1)/2}}{\Gamma(s+(\ell+1/2-1)/2)}\cdot\frac{1}{r^2}\sum_{d\mid r^2}\sum_{\substack{u\pmod*{d}\\(u,d)=1}}\Lambda\left(\f,\frac{u}{d},s\right) %
\]
with %
\[%
\Lambda(\f,q,s)=\int_0^{+\infty}\f(\ic y+q)y^{s+(\ell-1/2)/2}\frac{\dd y}{y} %
\]
for any rational number \(q\). Using the functional equation for \(\Lambda(\f,q,s)\) (see \cite[Lemma 4.3]{MR2904928}), we obtain %
\[%
D_r(-\epsilon+\ic\tau)\ll_{\epsilon,\f}(1+\abs{\tau})^{1+2\epsilon}r^{2+5\epsilon}.
\]
From~\eqref{eq_bound}, we have also %
\[%
D_r(1+\epsilon+\ic\tau)\ll_{\epsilon,\f}\frac{1}{r^2}. %
\]
Finally, by the Phrägmen-Lindelöf principle, we deduce %
\[%
D_r(\sigma+\ic\tau)\ll_{\epsilon,\f}(1+\abs{\tau})^{1-\sigma+\epsilon}r^{2-4\sigma+\epsilon}.
\]
Reinserting this bound into~\eqref{eq_MDr} leads to the result. %
\end{proof}
\section{Proof of Theorem~\ref{thm1}}\label{sec_thm1}
We begin by establishing mean value results for the Fourier coefficients at squarefree integers. %
\begin{lem}\label{lem_interus}
Let \(\f\in\cush\), and assume it is an eigenform of all the Hecke operators. Let \(\epsilon>0\). There exist positive real numbers \(C_1\), \(C_2\) and \(C_3\) such that, for any \(x\geq 1\), we have %
\[%
\sumsqf_{t\leq x}\lambda_{\f}(t)\log\left(\frac{x}{t}\right)\leq C_1x^{3/4+\epsilon} %
\]
and %
\[%
C_2x\leq\sumsqf_{t\leq x}\lambda_{\f}(t)^2\leq C_3x %
\]
for any \(x\geq x_0(\f)\).
\end{lem}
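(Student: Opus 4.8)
The plan is to derive each of the three inequalities from the analytic data already gathered: the first from Lemma~\ref{lemMsf} on \(M(\f,s)\), and the two second-moment bounds from Proposition~\ref{prop_analyticsLdeux} on \(D(\f\otimes\overline{\f},s)\) together with the pointwise estimate~\eqref{eq_bound}. For the log-weighted sum I would use the Mellin kernel \(s^{-2}\): since \(\frac{1}{2\pi\ic}\int_{c-\ic\infty}^{c+\ic\infty}y^s s^{-2}\dd s\) equals \(\log y\) for \(y\geq 1\) and \(0\) for \(0<y\leq 1\) (any \(c>0\)), applying this with \(y=x/t\) and summing against \(\lambda_{\f}(t)\) over squarefree \(t\)—the interchange being justified by absolute convergence of \(M(\f,s)\) for \(\Re s>1\)—gives
\[
\sumsqf_{t\leq x}\lambda_{\f}(t)\log\left(\frac{x}{t}\right)=\frac{1}{2\pi\ic}\int_{c-\ic\infty}^{c+\ic\infty}M(\f,s)\frac{x^s}{s^2}\dd s
\]
for \(c>1\). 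I then shift the contour to \(\Re s=\tfrac34+\epsilon\): by Lemma~\ref{lemMsf} the integrand is holomorphic in the strip \(\tfrac34+\epsilon\leq\Re s\leq c\) (the kernel's only pole, at \(s=0\), lies to the left of the new line), and the horizontal segments vanish in the limit because \(M\) grows polynomially while \(s^{-2}\) decays like \(\abs{\tau}^{-2}\). On the new line~\eqref{UBMs} gives \(M(\f,s)\ll_{\f,\epsilon}(\abs{\tau}+1)^{1/4+\epsilon}\), whence
\[
\sumsqf_{t\leq x}\lambda_{\f}(t)\log\left(\frac{x}{t}\right)\ll_{\f,\epsilon}x^{3/4+\epsilon}\int_{-\infty}^{+\infty}\frac{(\abs{\tau}+1)^{1/4+\epsilon}}{\abs{\tfrac34+\epsilon+\ic\tau}^{2}}\dd\tau\ll_{\f,\epsilon}x^{3/4+\epsilon},
\]
the integral converging since its integrand decays like \(\abs{\tau}^{-7/4+\epsilon}\); this provides \(C_1\) (valid for all \(x\geq 1\)).

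For the second moment, Proposition~\ref{prop_analyticsLdeux} tells me \(D(\f\otimes\overline{\f},s)\) has in \(\Re s>\tfrac12\) a single simple pole, at \(s=1\), with positive residue \(c_0\) (up to positive constants this residue is the Petersson norm of \(\f\), read off from the unfolding in that proof). As \(\lambda_{\f}(n)^2\geq 0\) and \(D\) continues past \(\Re s=1\), the Wiener--Ikehara theorem yields \(\sum_{n\leq x}\lambda_{\f}(n)^2\sim c_0 x\) (alternatively a contour shift to \(\Re s=\tfrac12+\epsilon\) using~\eqref{ConvexityL2fs} gives \(c_0x+O(x^{1/2+\epsilon})\)). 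The squarefree sum is dominated term-by-term, so \(\sumsqf_{t\leq x}\lambda_{\f}(t)^2\leq\sum_{n\leq x}\lambda_{\f}(n)^2\leq C_3 x\), which is the upper bound.

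The lower bound is the step I expect to be the genuine obstacle, since a priori the bulk of \(\sum_{n\leq x}\lambda_{\f}(n)^2\) could be carried by the non-squarefree indices. To control this I would write each \(m\) uniquely as \(m=tn^2\) with \(t\) squarefree and square the uniform estimate~\eqref{eq_bound} to get \(\lambda_{\f}(tn^2)^2\leq\kappa\,\tau(n)^2\psi(n)^2\lambda_{\f}(t)^2\) for a constant \(\kappa>0\), so that
\[
c_0x+o(x)=\sum_{n\leq\sqrt{x}}\sumsqf_{t\leq x/n^2}\lambda_{\f}(tn^2)^2\leq\kappa\sum_{n\leq\sqrt{x}}\tau(n)^2\psi(n)^2\sumsqf_{t\leq x/n^2}\lambda_{\f}(t)^2.
\]
The decisive input is that \(\sum_{n\geq 1}\tau(n)^2\psi(n)^2 n^{-2}\) converges (as \(\tau(n)^2\psi(n)^2\ll_\epsilon n^\epsilon\)); hence, feeding in the upper bound \(\sumsqf_{t\leq y}\lambda_{\f}(t)^2\leq C_3 y\) just proven, the range \(n>N\) contributes at most \(\kappa C_3 x\sum_{n>N}\tau(n)^2\psi(n)^2 n^{-2}\leq\tfrac12 c_0 x\) once \(N=N(\f)\) is fixed large enough. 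For the finite head \(n\leq N\) I bound \(\sumsqf_{t\leq x/n^2}\lambda_{\f}(t)^2\leq\sumsqf_{t\leq x}\lambda_{\f}(t)^2\) by monotonicity, and collecting both ranges gives
\[
\tfrac12 c_0 x+o(x)\leq\kappa\Bigl(\sum_{n\leq N}\tau(n)^2\psi(n)^2\Bigr)\sumsqf_{t\leq x}\lambda_{\f}(t)^2,
\]
so \(\sumsqf_{t\leq x}\lambda_{\f}(t)^2\geq C_2 x\) for all \(x\geq x_0(\f)\), completing the lemma.
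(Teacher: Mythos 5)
Your proposal is correct, and it diverges from the paper's proof in two of its four steps. The first inequality and the upper bound \(\sumsqf_{t\leq x}\lambda_{\f}(t)^2\leq C_3x\) follow the paper's own route exactly (Perron with the kernel \(s^{-2}\), shift to \(\Re s=3/4+\epsilon\) using \eqref{UBMs}, then term-by-term domination of the squarefree sum by the full sum). For the asymptotic of the full second moment the paper instead runs a \emph{truncated} effective Perron formula, shifts to \(\Re s=1/2+\epsilon\), and balances at \(T=x^{1/4+\alpha}\) to get the quantitative \eqref{eq_kolm}; you invoke Wiener--Ikehara, which yields only \(\sum_{n\leq x}\lambda_{\f}(n)^2\sim c_0x\) but is all your argument needs. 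One caveat: your parenthetical alternative --- an untruncated contour shift to \(\Re s=\tfrac12+\epsilon\) giving error \(O(x^{1/2+\epsilon})\) --- does not work, since with the kernel \(1/s\) and the bound \eqref{ConvexityL2fs} the integrand on that line decays only like \(\abs{\tau}^{-\epsilon}\), so the vertical integral diverges; truncating and optimizing forces exactly the paper's error \(O(x^{3/4+\alpha+\epsilon})\). This is harmless for the lemma because your main route is Tauberian, but note that the paper reuses the effective form \eqref{eq_kolm} in the proof of Theorem~\ref{thm2}, where a Tauberian statement would not suffice. For the crucial lower bound your argument is genuinely different and correct: the paper compares the full sum with the weighted sum \(\sumsqf_{t\leq x}\lambda_{\f}(t)^2t^{-1/2}\log(x/t)\), sandwiches it between \(c_1x^{1/2}\) and \(c_2x^{1/2}\) (\eqref{eq_neli} and \eqref{eq_viis}), and then localizes to \(]c_3x,x]\) with \(c_3<\min(1,c_1^2/c_2^2)\); you strip the square part of \(n=tm^2\) by a head/tail split at a fixed \(N\), absorbing the tail through the convergence of \(\sum_m\tau(m)^2\psi(m)^2m^{-2}\) together with the already-proved upper bound. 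Your version is more elementary, avoids the weighted sandwich altogether, and is in fact the same device the paper itself deploys later in \S\ref{thm2p1} (splitting at a fixed constant \(A\)) to reach \eqref{eq_momentdeux} on short intervals; the paper's sandwich buys, in addition, that the mass of the squarefree second moment concentrates in \(]c_3x,x]\), information that is however never used. Finally, both proofs tacitly need \(r_{\f}=c_0>0\); you rightly trace this to the Rankin--Selberg unfolding (the residue is a positive multiple of the Petersson norm of \(\f\)), a point the paper leaves implicit.
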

\begin{proof}
Using the Perron formula~\cite[Theorem II.2.3]{MR1342300}, we write %
\[%
\sumsqf_{t\leq x}\lambda_{\f}(t)\log\left(\frac{x}{t}\right)=\frac{1}{2\pi\ic}\int_{2-\ic\infty}^{2+\ic\infty}M(\f,s)x^s\frac{\dd s}{s^2}. %
\]
We move the line of integration to \(\Re s=3/4+\epsilon\) and use Lemma~\ref{lemMsf} to have %
\[%
\sumsqf_{t\leq x}\lambda_{\f}(t)\log\left(\frac{x}{t}\right)\leq C_1x^{3/4+\epsilon}.
\]

For the second formula, we use an effective version of the Perron formula~\cite[Corollary II.2.2.1]{MR1342300}: %
\[%
\sum_{n\leq x}\lambda_{\f}(n)^2=\frac{1}{2\pi\ic}\int_{\kappa-\ic T}^{\kappa+\ic T}D(\f\otimes\overline{\f},s)x^s\frac{\dd s}{s}+O\left(\frac{x^{1+2\alpha+\epsilon}}{T}\right) %
\]
for any \(T\leq x\) and \(\kappa=1+1/\log x\). Proposition~\ref{prop_analyticsLdeux} allows to shift the line of integration to \(\Re s=1/2+\epsilon\). We get %
\[%
\frac{1}{2\pi\ic}\int_{\kappa-\ic T}^{\kappa+\ic T}D(\f\otimes\overline{\f},s)x^s\frac{\dd s}{s}=r_{\f}x+\frac{1}{2\pi\ic}\int_{\mathcal{L}}D(\f\otimes\overline{\f},s)x^s\frac{\dd s}{s}
\]
where \(r_{\f}\) is the residue at \(s=1\) of \(D(\f\otimes\overline{\f},s)\) and \(\mathcal{L}\) is the contour made from segments joining in  order the points \(\kappa-\ic T\), \(1/2+\epsilon-\ic T \), \(1/2+\epsilon+\ic T\) and \(\kappa+\ic T\). With the convexity bound in Proposition~\ref{prop_analyticsLdeux} we have %
\[%
\int_{1/2+\epsilon\pm\ic T}^{\kappa\pm\ic T}D(\f\otimes\overline{\f},s)x^s\frac{\dd s}{s}\ll\frac{x^{1+\epsilon}}{T} %
\]
if \(T\leq x^{1/2}\) and 
\[%
\int_{1/2+\epsilon-\ic T}^{1/2+\epsilon+\ic T}D(\f\otimes\overline{\f},s)x^s\frac{\dd s}{s}\ll x^{1/2+\epsilon}T. %
\]
We choose \(T=x^{1/4+\alpha}\) and obtain %
\begin{equation}\label{eq_kolm}
\sum_{n\leq x}\lambda_{\f}(n)^2=r_{\f}x+O\left(x^{3/4+\alpha+\epsilon}\right).
\end{equation}
Each positive integer \(n\) may be decomposed uniquely as \(n=tm^2\) with squarefree \(t\). Using~\eqref{eq_bound} we have %
\begin{align*}
\sum_{n\leq x}\lambda_{\f}(n)^2&\ll_{\f}\sumsqf_{t\leq x}\lambda_{\f}(t)^2\sum_{m\leq(x/t)^{1/2}}\tau(m)\psi(m)\\
&\ll_{\f}x^{1/2}\sumsqf_{t\leq x}\frac{\lambda_{\f}(t)^2}{t^{1/2}}\log\left(\frac{x}{t}\right). %
\end{align*}
Combining this with~\eqref{eq_kolm} we find %
\begin{equation}\label{eq_neli}
\sumsqf_{t\leq x}\frac{\lambda_{\f}(t)^2}{t^{1/2}}\log\left(\frac{x}{t}\right)\geq c_1x^{1/2}\qquad(x\geq x_0(\f)) %
\end{equation}
where the constant \(c_1\) depends only on \(\f\). On the other hand, \eqref{eq_kolm} leads to %
\begin{equation}\label{eq_viis}
\sumsqf_{t\leq x}\frac{\lambda_{\f}(t)^2}{t^{1/2}}\log\left(\frac{x}{t}\right)\leq\sum_{n\leq x}\frac{\lambda_{\f}(n)^2}{n^{1/2}}\log\left(\frac{x}{n}\right)\leq c_2x^{1/2} %
\end{equation}
where \(c_2\) depends only on \(\f\). Let \(c_3\in]0,1[\). From~\eqref{eq_neli} and~\eqref{eq_viis}, it follows that %
\begin{align*}
\frac{\log(1/c_3)}{(c_3x)^{1/2}}\sumsqf_{c_3x<t\leq x}\lambda_{\f}(t)^2&\geq\sumsqf_{c_3x<t\leq x}\frac{\lambda_{\f}(t)^2}{t^{1/2}}\log\left(\frac{x}{t}\right)\\
&= \sumsqf_{t\leq x}\frac{\lambda_{\f}(t)^2}{t^{1/2}}\log\left(\frac{x}{t}\right)-\sumsqf_{t\leq c_3x}\frac{\lambda_{\f}(t)^2}{t^{1/2}}\log\left(\frac{x}{t}\right)\\
&\geq\left(c_1-c_2c_3^{1/2}\right)x^{1/2}. %
\end{align*}
We deduce %
\[%
\sumsqf_{c_3x<t\leq x}\lambda_{\f}(t)^2\geq\frac{c_3^{1/2}}{\log(1/c_3)}\left(c_1-c_2c_3^{1/2}\right)x. %
\]
Choosing \(c_3<\min(1,c_1^2/c_2^2)\) we have %
\begin{equation*}\label{eq_yat}
\sumsqf_{t\leq x}\lambda_{\f}(t)^2\gg \sumsqf_{c_3x<t\leq x}\lambda_{\f}(t)^2\gg x. %
\end{equation*}

Finally, ~\eqref{eq_kolm} gives %
\[
\sumsqf_{t\leq x}\lambda_{\f}(t)^2\leq\sum_{n\leq x}\lambda_{\f}(n)^2\ll x
\]
hence
\[
\sumsqf_{t\leq x}\lambda_{\f}(t)^2\asymp x. 
\]
\end{proof}
With this Lemma, we can complete the proof of Theorem~\ref{thm1}. From~\eqref{eq_bound} we derive %
\begin{align*}
\sumsqf_{t\leq x}\abs*{\lambda_{\f}(t)}\log\left(\frac{x}{t}\right)&\gg x^{-\alpha}\sumsqf_{t\leq x}\abs*{\lambda_{\f}(t)}^2\log\left(\frac{x}{t}\right)\\
&\gg x^{-\alpha}\sumsqf_{t\leq x/2}\abs*{\lambda_{\f}(t)}^2. %
\end{align*}
Hence, Lemma~\ref{lem_interus} implies %
\begin{equation}\label{eq_kuus}
\sumsqf_{t\leq x}\abs*{\lambda_{\f}(t)}\log\left(\frac{x}{t}\right)\gg_{\f,\alpha}x^{1-\alpha}. 
\end{equation}
We detect signs of Fourier coefficients with the help of %
\[%
\frac{\abs{\lambda_{\f}(t)}+\lambda_{\f}(t)}{2}=\begin{cases}%
\lambda_{\f}(t) & \text{if \(\lambda_{\f}(t)>0\)}\\
0 & \text{otherwise}.
\end{cases}
\]
Using~\eqref{eq_bound}, we have %
\begin{equation}\label{eq_seitse}
\sumsqf_{t\leq x}\left(\abs{\lambda_{\f}(t)}+\lambda_{\f}(t)\right)\log\left(\frac{x}{t}\right)\ll\cT_{\f}^+(x)x^\alpha\log x. %
\end{equation}
Moreover, \eqref{eq_kuus} and Lemma~\ref{lem_interus} imply %
\begin{align}
\sumsqf_{t\leq x}\left(\abs{\lambda_{\f}(t)}+\lambda_{\f}(t)\right)\log\left(\frac{x}{t}\right)&=\sumsqf_{t\leq x}\abs{\lambda_{\f}(t)}\log\left(\frac{x}{t}\right)+\sumsqf_{t\leq x}\lambda_{\f}(t)\log\left(\frac{x}{t}\right)\notag\\%
&\gg x^{1-\alpha}+O\left(x^{3/4+\epsilon}\right)\notag\\
&\gg x^{1-\alpha}.\label{eq_kaheksa}
\end{align}
Finally, equations~\eqref{eq_seitse} and~\eqref{eq_kaheksa} give %
\[%
\cT_{\f}^+(x)\gg\frac{x^{1-2\alpha}}{\log x}\cdot
\]
Similarly, using %
\[%
\frac{\abs{\lambda_{\f}(t)}-\lambda_{\f}(t)}{2}=\begin{cases}%
-\lambda_{\f}(t) & \text{if \(\lambda_{\f}(t)<0\)}\\
0 & \text{otherwise}
\end{cases}
\]
we obtain %
\[%
\cT_{\f}^-(x)\gg\frac{x^{1-2\alpha}}{\log x}\cdot
\]
This finishes the proof of Theorem~\ref{thm1}.
\section{Proof of Theorem~\ref{thm2}}\label{sec_thm2}
The basic idea of proof is the same as for Theorem~\ref{thm1}, although here we localize on short intervals. The device~\eqref{eq_Sounddevice}  with the analytic properties of \(M(\f,s)\) gives a nice mean value estimate for \(\lambda_{\f}(t)\) over the squarefree integers in a short interval, see \eqref{eq_momun}. However our series \(D(\f\otimes\overline{\f},s)\) runs over all positive (not just squarefree) integers. We cannot obtain a counterpart for \(|\lambda_{\f}(t)|^2\). To get around, we consider a bundle of short intervals and lead to two moment estimates \eqref{eq_momentun} and \eqref{eq_momentdeux} in \S\ref{thm2p1}. Then we can enumerate the sign changes in \S\ref{thm2p2}.%

\subsection{Computation of moments of order \(1\) and \(2\)}\label{thm2p1}
Let 
$$
0\le \alpha< 1/4
\qquad\text{and}\qquad
1>\eta>3/4+\alpha. 
$$
Suppose that \(x\) is sufficiently large. We set \(h=x^\eta\) and define \(\delta\) by \(\ee^{2\delta}=1+h/x\). We have \(\delta \asymp h/x\). %

For all \(s\in\CC\) such that \(\abs*{\Re s}\leq 2\), we have \((\ee^{\delta s}-1)^2/s^2\ll\min\left(\delta^2,1/\abs{s}^2\right)\). It follows then by Lemma~\ref{lemMsf} and~\eqref{eq_Sounddevice} that %
\begin{align}%
\makebox[1cm][l]{\(\displaystyle\sumsqf_{x\leq t\leq x+h}\lambda_{\f}(t)\min\left(\log\left(\frac{x+h}{t}\right),\log\left(\frac{t}{x}\right)\right)\)}&&\notag\\
&= \frac{1}{2\pi\ic}\int_{3/4+\epsilon-\ic\infty}^{3/4+\epsilon+\ic\infty}M(\f,s)\frac{(\ee^{\delta s}-1)^2}{s^2} x^s\dd s
\notag\\
&\ll  x^{3/4+\epsilon}\int_{-\infty}^{+\infty}\left(\abs{\tau}+1\right)^{1/4+\epsilon}\min\left(\delta^2,\frac{1}{1+\abs{\tau}^2}\right)\dd\tau
\notag\\
&\ll  h^{3/4}x^\epsilon.\label{eq_momun} %
\end{align}
For any integer constant \(A>0\), let \((\epsilon_1,\dotsc,\epsilon_A)\in\{-1,1\}^A\). The bound for the moment of order \(1\) follows from~\eqref{eq_momun}, that is %
\begin{equation}\label{eq_momentun}
\sum_{m\leq A}\epsilon_m\sumsqf_{\frac{x}{m^2}<t<\frac{x+h}{m^2}}\lambda_{\f}(t)\min\left(\log\left(\frac{x+h}{tm^2}\right),\log\left(\frac{tm^2}{x}\right)\right)\ll h^{3/4}x^\epsilon. %
\end{equation}

We turn to the evaluation of the moment of order \(2\). 
Since $\eta>3/4+\alpha$, by \eqref{eq_kolm} and Lemma~\ref{lemma6}, we obtain for some positive constant \(C\),
\begin{equation}
\begin{aligned}
Ch 
& \le C'\sum_{x<n\le x+h} \lambda_{\f}(n)^2 
\\
& \le \sum_{m\le \sqrt{x+h}} \tau(m)^4
\sumsqf_{\frac{x}{m^2}\leq t\leq \frac{x+h}{m^2}} \lambda_{\f}(t)^2.
\end{aligned}
\end{equation}
Next we prove  that  \(\sqrt{x+h}\) can be replaced by some constant \(A\) in the outer sum up to the cost of a replacement of a smaller  \(C\). Indeed  we will prove, for any fixed $A>0$,  
\begin{equation}
\sum_{A< m\le \sqrt{x+x^\eta}} \tau(m)^4
\sumsqf_{\frac{x}{m^2}\leq t\leq \frac{x+x^\eta}{m^2}} \lambda_{\f}(t)^2 
\ll x^{\eta} A^{-1+\varepsilon}.
\end{equation}
Note that %
\begin{align}%
\sum_{\sqrt{x}\leq m\le \sqrt{x+x^\eta}} \tau(m)^4\sumsqf_{\frac{x}{m^2}\leq t\leq \frac{x+x^\eta}{m^2}} \lambda_{\f}(t)^2%
&=%
\sum_{\sqrt{x}\leq m\le \sqrt{x+x^\eta}} \tau(m)^4\sumsqf_{t\leq \frac{x+x^\eta}{m^2}} \lambda_{\f}(t)^2%
\notag\\
&\ll x^{1/2+\varepsilon} %
\label{eq_deraj}
\end{align}
by~\eqref{eq_kolm}. In light of~\eqref{eq_deraj}, \eqref{eq_kolm}  and \eqref{defvartheta}, it suffices to evaluate
\begin{equation}
\sum_{A<m\le \sqrt{x}} \tau(m)^4
\min\bigg\{\max\bigg[\frac{x^\eta}{m^2},  \bigg(\frac{x}{m^2}\bigg)^{3/4+\alpha+\varepsilon}\bigg],
\bigg(1+\frac{x^\eta}{m^2}\bigg)\frac{x^{2\alpha}}{m^{4\alpha}}\bigg\}.
\end{equation}
Write $y=x^{\eta}/m^2$ and $Y=x/ m^2$, then $0<y<Y $ and $Y\gg 1$.  Note $2\alpha<3/4+\alpha$. The term $\min\{\cdots\}$ in the preceding formula  is then handled by observing
\begin{align*}
\min\left\{\max(y,Y^{3/4+\alpha+\varepsilon}), (1+y)Y^{2\alpha}\right\} 
&\ll \left\{
\begin{array}{ll}
Y^{2\alpha} & \mbox{ if $y\le 1$},\\
yY^{2\alpha}  & \mbox{ if $1< y \le Y^{3/4-\alpha}$},\\
Y^{3/4+\alpha+\varepsilon} & \mbox{ if $Y^{3/4-\alpha}< y \le Y^{3/4+\alpha+\varepsilon}$},\\
y & \mbox{ if $Y^{3/4+\alpha+\varepsilon}<y<Y$}.
\end{array}
\right.
\end{align*}
We split the sum over $m$ into 4 subsums with the ranges of summation dividing at the points for which $y=1$, $y=Y^{3/4-\alpha}$ and $y=Y^{3/4+\alpha+\varepsilon}$ respectively. Write  
\begin{equation*}
\mbox{$\eta_3=\frac{\eta}{2}$ , \quad $\eta_2= \frac{\eta-3/4+\alpha}{1/2+2\alpha}$ , \quad $\eta_1= \frac{\eta-(3/4+\alpha+\varepsilon)} {1/2-2(\alpha+\varepsilon)}$ }
\end{equation*} 
(note $\eta_3>\eta_2>\eta_1>0$). The 4 subsums are evaluated via the following summations:
\begin{align*}
\sum_{x^{\eta_3}<m\le \sqrt{x}} 
\tau(m)^4 \frac{x^{2\alpha}}{m^{4\alpha}}
& \ll x^{2\alpha + (1-4\alpha)/2+\varepsilon} 
= x^{1/2+\varepsilon} =o(x^\eta),
\\
\sum_{x^{\eta_2}<m\le x^{\eta_3}} 
\tau(m)^4 \frac{x^{\eta+2\alpha}}{m^{2+4\alpha}} 
& \ll x^{\eta+2\alpha- \eta_2(4\alpha+1)+\varepsilon}= x^{\eta-\frac{(\eta-3/4)(1+4\alpha)}{1/2+2\alpha}+\varepsilon},
\\
\sum_{x^{\eta_1}<m\le x^{\eta_2}} 
\!\!\!\! \tau(m)^4 \bigg(\frac{x}{m^2}\bigg)^{3/4+\alpha+\varepsilon}
& \ll x^{3/4+\alpha- \eta_1(2\alpha+1/2)+\varepsilon}
= x^{\eta-\frac{\eta-(3/4+\alpha+\varepsilon)}{1/2-2(\alpha+\varepsilon)}+\varepsilon},
\\
\sum_{A<m\le x^{\eta_1}} \tau(m)^4 \frac{x^\eta }{m^2}
& \ll x^\eta A^{-1+\varepsilon}.
\end{align*}

By taking a large enough constant $A$, we infer that 
\[%
\sum_{m\leq A}\tau(m)^4\sumsqf_{\frac{x}{m^2}<t<\frac{x+h}{m^2}}\lambda_{\f}(t)^2
\ge \big(C-O(A^{-1+\varepsilon})\big)h
\gg h. %
\]
This equation remains true if we replace \((x,h)\) by \((x+h/4,h/2)\), so %
\begin{equation}\label{eq_JWJWJW}%
\sum_{m\leq A}\tau(m)^4\sumsqf_{\frac{x+h/2}{m^2}<t\leq\frac{x+3h/4}{m^2}}\lambda_{\f}(t)^2
\gg h. %
\end{equation}
Moreover %
\begin{multline}\label{eq_JW}%
\sum_{m\leq A}\tau(m)^4\sumsqf_{\frac{x}{m^2}<t<\frac{x+h}{m^2}}\lambda_{\f}(t)^2\min\left(\log\left(\frac{x+h}{tm^2}\right),\log\left(\frac{tm^2}{x}\right)\right)\\%
\geq\sum_{m\leq A}\tau(m)^4\sumsqf_{\frac{x+h/4}{m^2}<t\leq\frac{x+3h/4}{m^2}}\lambda_{\f}(t)^2\min\left(\log\left(\frac{x+h}{tm^2}\right),\log\left(\frac{tm^2}{x}\right)\right)
\end{multline}
and, if \(t\in\left[\frac{x+h/4}{m^2},\frac{x+3h/4}{m^2}\right]\) then %
\begin{equation}\label{eq_JWJW}%
\frac{x}{h}\min\left(\log\left(\frac{x+h}{tm^2}\right),\log\left(\frac{tm^2}{x}\right)\right)\gg 1. %
\end{equation}
We deduce from~\eqref{eq_JW}, \eqref{eq_JWJW} and \eqref{eq_JWJWJW} that %
\begin{equation}\label{eq_momentdeux}%
\sum_{m\leq A}\tau(m)^4\sumsqf_{\frac{x}{m^2}<t<\frac{x+h}{m^2}}\lambda_{\f}(t)^2\min\left(\log\left(\frac{x+h}{tm^2}\right),\log\left(\frac{tm^2}{x}\right)\right)\gg\frac{h^2}{x}. %
\end{equation}
This is our moment of order \(2\).
\subsection{Implication on the number of sign changes}\label{thm2p2}
We use~\eqref{eq_momentun} and~\eqref{eq_bound} to write %
\begin{align}
&\sum_{m\leq A}\sumsqf_{\frac{x}{m^2}<t<\frac{x+h}{m^2}}\left(\abs*{\lambda_{\f}(t)}+\epsilon_m\lambda_{\f}(t)\right)\min\left(\log\left(\frac{x+h}{tm^2}\right),\log\left(\frac{tm^2}{x}\right)\right)\\
&\phantom{xxx}\gg %
\sum_{m\leq A}\sumsqf_{\frac{x}{m^2}<t<\frac{x+h}{m^2}}t^{-\alpha}\lambda_{\f}(t)^2\min\left(\log\left(\frac{x+h}{tm^2}\right),\log\left(\frac{tm^2}{x}\right)\right)+O\left(h^{3/4+\epsilon}\right)\\
&\phantom{xxx}\gg x^{-1-\alpha}h^2+O\left(h^{3/4+\epsilon}\right)\label{eq_contradi} %
\end{align}
by~\eqref{eq_momentdeux}. If \(\eta>\frac{4}{5}(1+\alpha)\),  we deduce %
\[%
\sum_{m\leq A}\sumsqf_{\frac{x}{m^2}<t<\frac{x+h}{m^2}}\left(\abs*{\lambda_{\f}(t)}+\epsilon_m\lambda_{\f}(t)\right)\min\left(\log\left(\frac{x+h}{tm^2}\right),\log\left(\frac{tm^2}{x}\right)\right)\gg x^{2\eta-1-\alpha}. 
\]

Assume that, for all \(m\in\{1,\dotsc,A\}\), there exists \(\epsilon_m\in\{-1,1\}\) such that the sign of \(\lambda_{\f}(t)\) is \(-\epsilon_m\) for every squarefree \(t\in\left]\frac{x}{m^2},\frac{x+h}{m^2}\right[\). Then, %
\[%
\sum_{m\leq A}\sumsqf_{\frac{x}{m^2}<t<\frac{x+h}{m^2}}\left(\abs*{\lambda_{\f}(t)}+\epsilon_m\lambda_{\f}(t)\right)\min\left(\log\left(\frac{x+h}{tm^2}\right),\log\left(\frac{tm^2}{x}\right)\right)=0 %
\]
in contradiction with~\eqref{eq_contradi}. 
Consequently, there exists \(m\in\{1,\dotsc,A\}\) such that the interval \(\left]\frac{x}{m^2},\frac{x+h}{m^2}\right[\) contains squarefree integers \(t\) and \(t'\) satisfying %
\[%
\abs*{\lambda_{\f}(t)}=\lambda_{\f}(t)\neq 0
\qquad\text{and}\qquad
\abs*{\lambda_{\f}(t')}=-\lambda_{\f}(t')\neq 0
\]
\emph{i.e.} \(\lambda_{\f}(t)\lambda_{\f}(t')<0\). 

Let \(X\) be any sufficiently large number. Write \(B=(1+1/A)^2\), \(H=(BX)^\eta\) and \(J=\lfloor(B-1)X/H\rfloor\). For any \(j\in\{0,\dotsc,J-1\}\) and any \(m\in\{1,\dotsc,A\}\), let %
\[%
I_j(m)=\left]\frac{X+jH}{m^2},\frac{X+(j+1)H}{m^2}\right[.%
\]
The interval \(I_J(m+1)\) is on the left side of \(I_0(m)\). Moreover, if \(j\neq k\), then \(I_j(m)\cap I_k(m)=\emptyset\). It follows that the \(AJ\) intervals \(I_j(m)\) are disjoint. %
Since, for any \(j\), there exists \(m\) such that \(I_j(m)\) contains a sign change, we obtain at least \(J\gg X^{1-\eta}\) sign changes over the interval \([1,X]\). The proof is complete after replacing \(\eta\) by \(\eta+\epsilon\).
\bibliography{Lau_Royer_Wu_2014}
\bibliographystyle{amsplain}
\end{document}